\newtheorem{thm}{Theorem}[section]
\newtheorem{cor}[thm]{Corollary}
\newtheorem{prop}[thm]{Proposition}
\theoremstyle{definition}
\newtheorem{defn}[thm]{Definition}
\theoremstyle{remark}
\newtheorem{rem}[thm]{Remark}
\numberwithin{equation}{section}
\newtheorem{exa}[thm]{Example}
\newcommand{\h}{\mathcal{H}}
\newcommand{\K}{\mathcal{K}}
\DeclareMathOperator{\Ran}{ran}
\DeclareMathOperator{\Span}{span}
\begin{document}
\title[$G$-frame representations]{$G$-frame representations with bounded operators}%
\author {Yavar Khedmati and Fatemeh Ghobadzadeh}%
\address{
\newline
\indent Department of Mathematics
\newline
\indent Faculty of Sciences
\newline \indent   University of Mohaghegh Ardabili
\newline \indent  Ardabil 56199-11367
\newline \indent Iran}
\email{khedmati.y@uma.ac.ir, khedmatiy.y@gmail.com}
\email{gobadzadehf@yahoo.com}
\subjclass[2000]{Primary 41A58, 42C15, 47A05} \keywords{representation, $g$-frame, duality}

\begin{abstract}
Dynamical sampling, as introduced by Aldroubi et al., deals with frame properties of sequences of the form $\{T^i f_1\}_{i\in \mathbb{N}}$, where $f_1$ belongs to Hilbert space $\h$ and $T:\h\rightarrow\h$ belongs to certain classes of the bounded operators. Christensen et al., study frames for $\h$ with index set $\mathbb{N}$ (or $\mathbb{Z}$), that have representations in the form $\{T^{i-1}f_1\}_{i\in \mathbb{N}}$ (or $\{T^if_0\}_{i\in \mathbb{Z}}$).
As frames of subspaces, fusion frames and generalized translation invariant systems are the spacial cases of $g$-frames, the purpose of this paper is to study $g$-frames $\Lambda=\{\Lambda_i\in B(\h,\K): i\in I\}$ $(I=\mathbb{N}$ or $\mathbb{Z}$)  having the form $\Lambda_{i+1}=\Lambda_1 T^{i},$ for $T\in B(\h).$ 
\end{abstract}
\maketitle
\section{{\textbf Introduction}}
In 1952, the concept of frames for Hilbert spaces was defined by Duffin and Schaeffer \cite{DS}. Frames are important tools in the signal/image processing \cite{Anto, Balador, Desti} , data compression \cite{Data2, Data1}, dynamical sampling \cite{ald1, ald2} and etc. 
\par Throughout this paper, $I$ is a countable index set, $\h$ and $\K$ are seperable Hilbert spaces, $\{\K_i: i\in I\}$ is a family of seperable Hilbert spaces, $Id_{\h}$ denotes the identity operator on $\h$,  $B(\h)$ and $GL(\h)$ denote the set of all bounded linear operators and the set of all invertible bounded linear operators on $\h$, respectively, $l^2(\h,I)=\big\{\{g_i\}_{i\in I}:g_i\in\K,\sum_{i\in\mathbb{Z}}\|g_i\|^2<\infty\big\}$ as well. Also, we will apply $B(\h,\K)$ for the set of all bounded linear operators from $\h$ to $\K$. We use $\ker T$ and $\Ran T$ for the null space and range $T\in B(\h)$, respectively. 
We denote the natural, integer and complex numbers by $\mathbb{N}$, $\mathbb{Z}$ and $\mathbb{C}$, respectively.
\par A sequence $F=\{f_i\}_{i\in I}$ in $\h$ is called a  frame for $\h$ if there exist two constants $A_F, B_F> 0$ such that
\begin{eqnarray}\label{abc}
A_F \|f\|^2\leq\sum_{i\in I}|\langle f,f_i\rangle |^2 \leq B_F \|f\|^2,\quad f\in \h.
\end{eqnarray}
Let $F=\{f_i\}_{i\in I}$ be a frame for $\h,$ then the operator
\begin{eqnarray*}
T_F:l^2(\h,I) \rightarrow\h,\quad T_F(\{c_i\}_{i\in I})=\sum_{i\in I} c_if_i,
\end{eqnarray*}
is well define and onto, also its adjoint is
\begin{eqnarray*}
T^*_F:\h\rightarrow l^2(\h,I) ,\quad T_F^* f=\{\langle f,f_i \rangle\}_{i\in I}.
\end{eqnarray*}
The operators $T_F$ and $T^*_F$ are called the synthesis and analysis operators of frame $F,$ respectively.
\par Frames for $\h$ allow each $f \in\h$ to be expanded as an (infinite) linear combination of the frame elements.
For more on frame we refer to the book \cite{c4}.
Aldroubi et al., introduced the concept of dynamical sampling deals with frame properties of sequences of the form $\{T^i f_1\}_{i\in \mathbb{N}}$, for $f_1\in\h$ and $T:\h\rightarrow\h$ belongs to certain classes of the bounded operators.
Christensen and Hassannasab analyze frames $F=\{f_i\}_{i\in\mathbb{Z}}$ having the form $F=\{T^{i} f_0\}_{i\in\mathbb{Z}}$, where $T$ is a bijective linear operator (not necessarily bounded) on $\Span\{f_i\}_{i\in\mathbb{Z}}$. They show, $(T^*)^{-1}$ is the only possibility of the representing operator for the duals of the frame $F=\{f_i\}_{i\in\mathbb{Z}}=\{T^{i}f_0\}_{i\in\mathbb{Z}}$, $T\in GL(\h)$ \cite{main}. They even clarify stability of the representation of frames. Christensen et al., determine the frames that have a representation with a bounded operator, and survey the properties of this operators \cite{chretal}.
\begin{prop}\cite{main}\label{mnref}
Consider a frame sequence $F=\{f_i\}_{i\in \mathbb{Z}}$ in $\h$ which spans an infinite dimentional subspace.
The following are equivalent:
\begin{enumerate}
\item[(i)] $F$ is linearly independent. 
\item[(ii)] The map $Tf_i:=f_{i+1}$ is well-defined and extends to a linear and invertible operator 
$T:\Span\{f_i\}_{i\in \mathbb{Z}}\rightarrow\Span\{f_i\}_{i\in \mathbb{Z}}$.
\end{enumerate}
In the affirmative case, $F=\{T^i f_0\}_{i\in \mathbb{Z}}$.
\end{prop}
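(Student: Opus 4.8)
The plan is to prove the two implications separately and then read off the representation $F=\{T^if_0\}_{i\in\mathbb{Z}}$ as a byproduct. Throughout, \emph{linearly independent} is understood in the usual sense that every finite subfamily of $\{f_i\}_{i\in\mathbb{Z}}$ is linearly independent.

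For (i) $\Rightarrow$ (ii), I would first check that the assignment $Tf_i:=f_{i+1}$ extends unambiguously to $\Span\{f_i\}_{i\in\mathbb{Z}}$ by linearity. A typical element has the form $g=\sum_k a_k f_{i_k}$ (a finite sum), and I set $Tg:=\sum_k a_k f_{i_k+1}$. Well-definedness amounts to showing that whenever $\sum_k a_k f_{i_k}=0$ one also has $\sum_k a_k f_{i_k+1}=0$; but by linear independence the hypothesis already forces every $a_k=0$, so there is nothing to check. The same reasoning applied to the backward shift $Sf_i:=f_{i-1}$ produces a well-defined linear map $S$ on the span, and since $ST$ and $TS$ fix each generator $f_i$ they equal the identity on $\Span\{f_i\}_{i\in\mathbb{Z}}$. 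As the range of $T$ contains every $f_{i+1}$, i.e. all of the generators, $T$ maps the span onto itself; hence $T$ is linear and invertible with $T^{-1}=S$.

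The converse (ii) $\Rightarrow$ (i) is where the real work lies, and I would argue by contradiction. Suppose $F$ is linearly dependent, so that after combining terms and translating the indices there is a relation $\sum_{k=0}^m c_k f_k=0$ with $c_0\neq0$ and $c_m\neq0$. Applying the linear, invertible $T$ and its inverse to this relation shifts it freely: for every $p\in\mathbb{Z}$ one obtains $\sum_{k=0}^m c_k f_{k+p}=0$, using $Tf_i=f_{i+1}$ and $T^{-1}f_i=f_{i-1}$. Solving the shifted relations for the extreme terms (using $c_m\neq0$ to move up and $c_0\neq0$ to move down) yields, by induction in both directions, that $f_j\in\Span\{f_1,\dots,f_m\}$ for every $j\in\mathbb{Z}$. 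Consequently $\Span\{f_i\}_{i\in\mathbb{Z}}\subseteq\Span\{f_1,\dots,f_m\}$ is finite-dimensional, contradicting the standing hypothesis that $F$ spans an infinite-dimensional subspace. Therefore $F$ must be linearly independent.

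I expect the propagation step in the converse to be the main obstacle: one must be careful that invertibility of $T$ is genuinely used to run the induction toward the negative indices (well-definedness alone only controls the forward shift and would leave $f_{-1},f_{-2},\dots$ unaccounted for), and that the endpoint coefficients $c_0,c_m$ remain nonzero after each shift so the recursion can be solved. Finally, in the affirmative case $Tf_i=f_{i+1}$ together with invertibility gives $f_i=T^i f_0$ for all $i\in\mathbb{Z}$ by a routine induction (positive powers from $T$, negative powers from $T^{-1}$), which is exactly the asserted representation $F=\{T^if_0\}_{i\in\mathbb{Z}}$.
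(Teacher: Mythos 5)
Your proof is correct and takes essentially the same route as the source: the paper itself states this proposition without proof, citing \cite{main}, and the argument there is exactly yours --- linear independence makes both the forward shift $Tf_i=f_{i+1}$ and the backward shift $Sf_i=f_{i-1}$ well-defined on $\Span\{f_i\}_{i\in\mathbb{Z}}$ with $ST=TS=Id$, while conversely a nontrivial dependence relation, propagated in both directions by $T^{\pm1}$, traps $\Span\{f_i\}_{i\in\mathbb{Z}}$ in a finite-dimensional subspace, contradicting the standing hypothesis. The only nitpicks are cosmetic: the containing span should be $\Span\{f_0,\dots,f_m\}$ rather than $\Span\{f_1,\dots,f_m\}$ (harmless, since $c_0\neq0$ puts $f_0$ in the latter when $m\geq1$), and the degenerate case $m=0$ simply forces every $f_i=0$, which contradicts infinite-dimensionality just as well.
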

\begin{thm}\cite{chretal}
Consider a frame $F=\{f_i\}_{i\in\mathbb{N}}$ in $\h$. Then the following are equivalent:
\begin{enumerate}
\item[(i)] $F$ has a representation $F=\{T^{i-1} f_1\}_{i\in\mathbb{N}}$ for some $T\in B(\h)$.
\item[(ii)] For some dual frame $G=\{g_i\}_{i\in\mathbb{N}}$ (and hence all)
\begin{align*}
f_{j+1}=\sum_{i\in\mathbb{N}}\langle f_j,g_i\rangle f_{i+1},\forall j\in\mathbb{N}.
\end{align*}
\item[(iii)] The $\ker T_F$ is invariant under the right-shift operator.
\end{enumerate}
In the affirmative case, letting $G=\{g_i\}_{i\in\mathbb{N}}$ denote an arbitrary dual frame of $F$, the operator
 $T$ has the form 
 \begin{align*}
Tf=\sum_{i\in\mathbb{N}}\langle f,g_i\rangle f_{i+1},\forall f\in\h,
\end{align*}
and $1\leq\|T\|\leq\sqrt{B_FA_F^{-1}}$.
\end{thm}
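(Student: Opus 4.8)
The plan is to route all three equivalences through the synthesis operator $T_F$ and its interaction with the right-shift operator $\mathcal{T}_r$ on the coefficient space $\ell^2(\mathbb{N})$, defined by $\mathcal{T}_r\{c_i\}_{i\in\mathbb{N}}=\{c_{i-1}\}_{i\in\mathbb{N}}$ with $c_0:=0$ (equivalently $\mathcal{T}_r e_i=e_{i+1}$ on the canonical basis $\{e_i\}$). The governing identity is the intertwining relation $T\,T_F=T_F\,\mathcal{T}_r$, which I would establish first: if $F=\{T^{i-1}f_1\}$ with $T\in B(\h)$, then $Tf_i=f_{i+1}$, and applying the bounded operator $T$ termwise (legitimate by continuity) to $T_F\{c_i\}=\sum_i c_if_i$ yields $T\,T_F\{c_i\}=\sum_i c_if_{i+1}=T_F\,\mathcal{T}_r\{c_i\}$.

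From this relation, (i)$\Rightarrow$(iii) is immediate: if $c\in\ker T_F$ then $T_F\mathcal{T}_r c=T\,T_Fc=0$, so $\mathcal{T}_r c\in\ker T_F$, i.e. $\ker T_F$ is right-shift invariant. For (iii)$\Rightarrow$(i), which I regard as the structural crux, I would use that $T_F$ is bounded and onto: each $f\in\h$ equals $T_Fc$ for some $c$, and I define $Tf:=T_F\mathcal{T}_r c$. Well-definedness is \emph{exactly} the invariance hypothesis, since $T_Fc=T_Fc'$ forces $c-c'\in\ker T_F$, hence $\mathcal{T}_r(c-c')\in\ker T_F$ and $T_F\mathcal{T}_rc=T_F\mathcal{T}_rc'$. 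Choosing the pseudo-inverse $c=T_F^{\dagger}f$ (which lands in $(\ker T_F)^{\perp}$ and is bounded) exhibits $T=T_F\mathcal{T}_rT_F^{\dagger}$ as a composition of bounded operators, so $T\in B(\h)$; evaluating at $f=f_i$ (for which $c=e_i$ is an admissible preimage) gives $Tf_i=T_F\mathcal{T}_re_i=f_{i+1}$, whence $F=\{T^{i-1}f_1\}$.

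For (i)$\Leftrightarrow$(ii): given (i) and any dual $G$, applying the bounded $T$ to the expansion $f_j=\sum_i\langle f_j,g_i\rangle f_i$ produces $f_{j+1}=\sum_i\langle f_j,g_i\rangle f_{i+1}$, so (ii) holds for every dual at once. Conversely, assuming (ii) for some dual $G$, I define $Tf:=\sum_i\langle f,g_i\rangle f_{i+1}$; this converges and is bounded because $\{f_{i+1}\}_i$ is a subfamily of $F$, hence Bessel with bound $B_F$, while $\{\langle f,g_i\rangle\}_i\in\ell^2(\mathbb{N})$ with $\ell^2$-norm at most $\sqrt{B_G}\,\norm{f}$, giving $\norm{Tf}\le\sqrt{B_FB_G}\,\norm{f}$. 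Evaluating at $f_j$ and invoking (ii) gives $Tf_j=f_{j+1}$, hence (i). Since $Tf_i=f_{i+1}$ together with continuity pins down $T$ on the dense set $\Span\{f_i\}$, the operator is unique and independent of the chosen dual, which is precisely the displayed formula for $T$.

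It remains to verify the norm bounds. The upper estimate reads off the factorization $T=T_F\mathcal{T}_rT_F^{\dagger}$ via $\norm{T_F}\le\sqrt{B_F}$, $\norm{\mathcal{T}_r}=1$, and $\norm{T_F^{\dagger}}\le A_F^{-1/2}$ (the last from $\norm{T_F^{\dagger}f}^2=\langle S_F^{-1}f,f\rangle\le A_F^{-1}\norm{f}^2$), yielding $\norm{T}\le\sqrt{B_FA_F^{-1}}$. The lower bound $\norm{T}\ge1$ is the step I expect to be the main obstacle, since it is not formal: if $\norm{T}<1$ then $\norm{f_{i+1}}=\norm{Tf_i}\le\norm{T}\,\norm{f_i}$ forces $\norm{f_i}\le\norm{T}^{\,i-1}\norm{f_1}$, so $\sum_i\norm{f_i}^2<\infty$; but $\sum_i\norm{f_i}^2=\mathrm{tr}(S_F)$ for the frame operator $S_F$, which is infinite when $\h$ is infinite-dimensional because $S_F\ge A_F\,Id_{\h}$. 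This contradiction gives $\norm{T}\ge1$, and it is exactly here that the (standing) infinite-dimensionality of $\h$ is essential.
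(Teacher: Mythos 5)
Your proof is correct, and it is worth noting that the paper itself does not prove this statement at all: it is imported verbatim from \cite{chretal}, so the only in-house comparison is with the proof of its $g$-frame analogue, Theorem \ref{MT}. Your route is genuinely different in the crux step (iii)$\Rightarrow$(i): you build $T$ from the factorization $T=T_F\mathcal{T}_rT_F^{\dagger}$, where well-definedness on fibers of $T_F$ is \emph{exactly} the shift-invariance hypothesis, and the same factorization then delivers boundedness and the upper estimate $\|T\|\leq\sqrt{B_FA_F^{-1}}$ in one stroke from $\|T_F\|\leq\sqrt{B_F}$, $\|\mathcal{T}_r\|=1$ and $\|T_F^{\dagger}f\|^2=\langle S_F^{-1}f,f\rangle\leq A_F^{-1}\|f\|^2$. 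By contrast, the argument in \cite{chretal} (mirrored in the paper's proof of Theorem \ref{MT}) defines the candidate operator only on the dense span of the frame vectors and bounds it by decomposing a finite coefficient sequence as $c=d+r$ with $d\in\ker T_F$ and $r\in\overline{\Ran T_F^*}=(\ker T_F)^{\perp}$, using shift-invariance to kill the $d$-part and the lower frame inequality on $(\ker T_F)^{\perp}$ (Lemma 5.5.5 of \cite{c4}) to control the $r$-part, before extending by continuity; the duality clause there is likewise obtained by shifting the kernel element $\{\langle f_j,g_i\rangle\}_{i}-e_j$. Your pseudo-inverse packaging is cleaner and gets the norm bound for free, while the span-and-decomposition route has the advantage of surviving the passage to $g$-frames in Theorem \ref{MT}, where the auxiliary linear-independence hypothesis stands in for what surjectivity of $T_F$ onto $\h$ buys you here. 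One point should be promoted from parenthesis to hypothesis: the lower bound $\|T\|\geq 1$ is genuinely false in finite dimensions (e.g.\ $\{2^{-(i-1)}\}_{i\in\mathbb{N}}$ is a frame for $\mathbb{C}$ represented by $T=\frac{1}{2}$), and while $\dim\h=\infty$ is a standing assumption in \cite{chretal}, it is \emph{not} among the conventions of the present paper, which only declares $\h$ separable; your trace argument $\sum_{i}\|f_i\|^2=\mathrm{tr}(S_F)\geq A_F\,\mathrm{tr}(Id_{\h})=\infty$ (equivalently: a trace-class $S_F$ cannot satisfy $S_F\geq A_F\,Id_{\h}$ in infinite dimensions) is exactly right once that assumption is stated explicitly.
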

 We will need the right-shift operator on $l^2(\h,\mathbb{N})$ and $l^2(\h,\mathbb{Z})$, defined by $\mathcal{T}(\{c_i\}_{i\in\mathbb{N}})=(0,c_1,c_2,...)$ and $\mathcal{T}(\{c_i\}_{i\in\mathbb{Z}})=\{c_{i-1}\}_{i\in\mathbb{Z}}$. 
 Clearly, the right-shift operator on $l^2(\h,\mathbb{Z})$ is unitary and $\mathcal{T}^*$ is the left-shift operator, i.e. $\mathcal{T}^*(\{c_i\}_{i\in\mathbb{Z}})=\{c_{i+1}\}_{i\in\mathbb{Z}}$. A subspace $V\subseteq l^2(\h,\mathbb{N})$ is invariant under right-shift (left-shift) if $\mathcal{T}(V)\subseteq V$ ($\mathcal{T}^*(V)\subseteq V$).
 \par In 2006, generalized frames (or simply $g$-frames) and $g$-Riesz bases were introduced by Sun \cite{ws}. "$G$-frames are natural generalizations of frames which cover many other recent generalizations of frames, e.g., bounded quasi-projectors, frames of subspaces, outer frames, oblique frames, pseudo-frames and a class of time-frequency localization operators \cite{wsper}. The interest in $g$-frames arises from the fact that they provide more choices on analyzing functions than frame expansion coefficients \cite{ws}, and also, every fusion frame is a $g$-frame \cite{c4}." Generalized translation invariant (GTI) frames can be realized as $g$-frames \cite{dtuykhmj}, so for motivating to answer the similar problems relevant to shift invariant systames and GTI systems in \cite{oprobs}, we generalize some results of the frame representations with bounded operators in \cite{main, chretal} to $g$-frames. Now, we summarize some facts about $g$-frames from \cite{Anaj and Rah, ws}.
 \begin{defn}
We say that $\Lambda=\{\Lambda_{i}\in B(\h,\K_{i}):i\in I\}$ is a generalized frame, or simply $g$-frame, for $\h$  with respect to $\{\K_{i}:i\in I\}$ if there are two constants $0<A_\Lambda\leq B_\Lambda<\infty$ such that
\begin{equation}\label{cgframe}
A_\Lambda\|f\|^{2}\leq\sum_{i\in I}\|\Lambda_i f\|^{2}\leq B_\Lambda\|f\|^{2},\; f\in \h.
\end{equation}
We call $A_\Lambda,B_\Lambda$ the lower and upper $g$-frame bounds, respectively.
$\Lambda$ is called a tight $g$-frame if $A_\Lambda=B_\Lambda,$ and a
Parseval $g$-frame if $A_\Lambda=B_\Lambda=1.$ 
If for each $i\in I,$ $\K_{i}=\K,$ then, $\Lambda$ is called a $g$-frame for $\h$ with respect to $\K$. 
Note that for a family $\{\K_i\}_{i\in I}$ of Hilbert spaces, there exists a Hilbert space $\K=\oplus_{i\in I}\K_i$ such that for all $i\in I$, $\K_i\subseteq\K$, where $\oplus_{i\in I}\K_i$ is the direct sum of $\{\K_i\}_{i\in I}$.
A family $\Lambda$ is called a $g$-Bessel family for $\h$  with respect to $\{\K_{i}:i\in I\}$ if the right hand inequality in (\ref{cgframe}) holds for all $f\in \h,$ in this case, $B_\Lambda$ is called the $g$-Bessel bound.
\end{defn}
 If there is no confusion, we use $g$-frame ($g$-Bessel family) instead of $g$-frame for $\h$ with respect to $\{\K_{i}:i\in I\}$ ($g$-Bessel family for $\h$ with respect to $\{\K_{i}:i\in I\}$). 
 \begin{exa}\cite{ws}
 Let $\{f_i\}_{i\in I}$ be a frame for $\h$. Suppose that 
 $\Lambda=\{\Lambda_{i}\in B(\h,\mathbb{C}):i\in I\}$, where  $$\Lambda_i f=\langle f,f_i\rangle,\quad f\in\h.$$ 
 It is easy to see that $\Lambda$ is a $g$-frame.
 \end{exa}
For a $g$-frame $\Lambda$, there exists a unique positive and invertible operator $S_\Lambda:\h\rightarrow \h$ such that
\begin{eqnarray*}\label{di}
S_\Lambda f=\sum_{i\in I}\Lambda_i^*\Lambda_i f,\quad f\in\h,
\end{eqnarray*}
 and $A_\Lambda Id_{\h}\leq S_\Lambda\leq B_\Lambda Id_{\h}.$ Consider the space
$$\Big(\sum_{i\in I}\oplus\K_i\Big)_{l^2}=\Big\{\{g_i\}_{i\in I}: g_i\in \K_i,\:i\in I\: and\: \sum_{i\in I}\|g_i\|^2<\infty\Big\}.$$
It is clear that, $\Big(\sum_{i\in I}\oplus\K_i\Big)_{l^2}$ is a Hilbert space with point wise operations and with the innerproduct given by $$\big\langle \{f_i\}_{i\in I},\{g_i\}_{i\in I}\big\rangle=\sum_{i\in I}\langle f_i,g_i\rangle.$$
For a $g$-Bessl $\Lambda$, the synthesis operator $T_\Lambda:\Big(\sum_{i\in I}\oplus\K_i\Big)_{l^2}\rightarrow\h$ is defined by 
\begin{align*}
T_\Lambda\big(\{g_i\}_{i\in I}\big)=\sum_{i\in I}\Lambda_i^*g_i.
\end{align*}
The adjoint of $T_\Lambda$, $T_\Lambda^*:\h\rightarrow\Big(\sum_{i\in I}\oplus\K_i\Big)_{l^2}$ is called the analysis operator of $\Lambda$ and is as follow
\begin{align*}
T_\Lambda^* f=\{\Lambda_i f\}_{i\in I},\quad f\in\h.
\end{align*}
It is obvious that $S_\Lambda=T_\Lambda T_\Lambda^*.$
\begin{defn}
Two $g$-frames $\Lambda$ and $\Theta$ are called dual if
\begin{align*}
\sum_{i\in I}\Lambda_i^*\Theta_i f=f,\quad f\in\h.
\end{align*}
\end{defn}
For a $g$-frame $\Lambda=\{\Lambda_{i}\in B(\h,\K_i):i\in I\}$, the $g$-frame
$\widetilde{\Lambda}=\{\Lambda_i S_\Lambda^{-1}\in B(\h,\K_i):i\in I\}$ is a dual of $\Lambda$, that is called canonical dual.
 \begin{defn} Consider a family $\Lambda=\{\Lambda_{i}\in B(\h,\K_i):i\in I\}$.
 \begin{enumerate}
 \item[(i)] 
 We say that $\Lambda$ is $g$-complete if $\{f: \Lambda_if=0, i\in I\}=\{0\}$.
 \item[(ii)]
 We say that $\Lambda$ is a $g$-Riesz basis if $\Lambda$ is $g$-complete and there are two constants $0<A_\Lambda\leq B_\Lambda<\infty$ such that for any finite set $I_n\subset I$ 
\begin{align*}
A_\Lambda\sum_{i\in I_n}\|g_i\|^{2}\leq\|\sum_{i\in I_n}\Lambda_i^*g_i\|^{2}\leq B_\Lambda\sum_{i\in I_n}\|g_i\|^{2},\; g_i\in \K_i.
\end{align*}
\item[(iii)] We say that $\Lambda$ is a $g$-orthonormal basis if it satisfies the following:
\begin{align*}
&\langle\Lambda_i^*g_i,\Lambda_j^* g_j\rangle=\delta_{i,j}\langle g_i,g_j\rangle,\quad i,j\in I, g_i\in\K_i, g_j\in \K_j,
\\&\sum_{i\in I}\|\Lambda_i f\|^2=\|f\|^2,\quad f\in\h.
\end{align*}
\end{enumerate}
\end{defn}
\begin{thm}\label{reisoth}
A family $\Lambda=\{\Lambda_{i}\in B(\h,\K_i):i\in I\}$ is a $g$-Riesz basis if and only if there exist a $g$-orthonormal basis $\Theta$ and $U\in GL(\h)$ such that 
$\Lambda_i=\Theta_i U,\:i\in I.$
\end{thm}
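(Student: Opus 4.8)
The plan is to route everything through the synthesis operator $T_\Lambda$, using the guiding principle that $\Theta$ is a $g$-orthonormal basis precisely when $T_\Theta\colon\big(\sum_{i\in I}\oplus\K_i\big)_{l^2}\to\h$ is unitary, while $\Lambda$ is a $g$-Riesz basis precisely when $T_\Lambda$ is a bounded bijection. Under this lens the factorization $\Lambda_i=\Theta_i U$ is nothing but the operator identity $T_\Lambda=U^*T_\Theta$, obtained by taking adjoints in $\Lambda_i^*=U^*\Theta_i^*$ and summing.

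For the implication ($\Leftarrow$), assume $\Lambda_i=\Theta_i U$ with $\Theta$ a $g$-orthonormal basis and $U\in GL(\h)$. For a finite $I_n\subset I$ and $g_i\in\K_i$ I would expand $\|\sum_{i\in I_n}\Lambda_i^*g_i\|^2=\|U^*\sum_{i\in I_n}\Theta_i^*g_i\|^2$ and collapse $\|\sum_{i\in I_n}\Theta_i^*g_i\|^2=\sum_{i\in I_n}\|g_i\|^2$ using the orthonormality relation $\langle\Theta_i^*g_i,\Theta_j^*g_j\rangle=\delta_{ij}\langle g_i,g_j\rangle$. The two-sided estimate $\|U^{-1}\|^{-1}\|h\|\leq\|U^*h\|\leq\|U\|\,\|h\|$ then delivers the Riesz bounds $A_\Lambda=\|U^{-1}\|^{-2}$ and $B_\Lambda=\|U\|^2$. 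Finally, $\Lambda_if=0$ for all $i$ forces $Uf=0$ (because $\Theta$, being a Parseval $g$-frame, is $g$-complete), hence $f=0$, so $\Lambda$ is $g$-complete and therefore a $g$-Riesz basis.

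For the substantive implication ($\Rightarrow$), the first step is to show that a $g$-Riesz basis has a synthesis operator $T_\Lambda$ that is a bounded bijection: the upper Riesz bound makes $T_\Lambda$ bounded, the lower bound makes it bounded below (extend the finite-sequence inequality by density), hence injective with closed range, while $g$-completeness gives $\ker T_\Lambda^*=\{0\}$, i.e.\ dense range; closed plus dense range yields surjectivity, so $T_\Lambda$ is invertible. I then set $U:=S_\Lambda^{1/2}\in GL(\h)$, legitimate since $S_\Lambda=T_\Lambda T_\Lambda^*$ is positive and invertible, and define $\Theta_i:=\Lambda_iS_\Lambda^{-1/2}\in B(\h,\K_i)$, so that $\Lambda_i=\Theta_iU$ by construction and $T_\Theta=S_\Lambda^{-1/2}T_\Lambda$. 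The norm condition $\sum_i\|\Theta_if\|^2=\|f\|^2$ is immediate from $S_\Theta=T_\Theta T_\Theta^*=S_\Lambda^{-1/2}S_\Lambda S_\Lambda^{-1/2}=Id_{\h}$. The orthonormality relation is equivalent to $T_\Theta^*T_\Theta=Id$; writing $T_\Theta^*T_\Theta=T_\Lambda^*(T_\Lambda T_\Lambda^*)^{-1}T_\Lambda$ and substituting $(T_\Lambda T_\Lambda^*)^{-1}=(T_\Lambda^*)^{-1}T_\Lambda^{-1}$ collapses it to the identity, and evaluating against the sequences supported in a single coordinate (equal to some $g\in\K_i$ in slot $i$ and zero elsewhere), for which $T_\Theta$ returns $\Theta_i^*g$, recovers exactly $\langle\Theta_i^*g_i,\Theta_j^*g_j\rangle=\delta_{ij}\langle g_i,g_j\rangle$.

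The step I expect to be the crux is the orthonormality identity $T_\Theta^*T_\Theta=Id$: it genuinely requires $T_\Lambda$ to be invertible as a map between the coefficient space and $\h$, not merely bounded below or merely surjective, so the real work is the functional-analytic verification that the $g$-Riesz conditions force full bijectivity, together with careful tracking of which identity operator ($Id_{\h}$ versus the identity on $\big(\sum_{i\in I}\oplus\K_i\big)_{l^2}$) appears at each stage. The remaining manipulations are routine bookkeeping.
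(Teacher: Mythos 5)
Your proof is correct in both directions, and you identified the genuine crux accurately: the identity $T_\Theta^*T_\Theta=Id$ requires full bijectivity of $T_\Lambda$, since a merely surjective synthesis operator (which is all an ordinary $g$-frame provides) would only make $T_\Lambda^*S_\Lambda^{-1}T_\Lambda$ the orthogonal projection onto $\Ran T_\Lambda^*$; your chain --- the upper Riesz inequality gives boundedness, the lower one extends by density to boundedness below (hence injectivity and closed range), $g$-completeness gives $\ker T_\Lambda^*=\{0\}$ and thus dense range, so $T_\Lambda$ is invertible --- closes exactly that gap, and the verification of the orthonormality relation on singly supported sequences is sound. For comparison: the paper does not prove Theorem \ref{reisoth} at all; it is stated among background facts imported from the $g$-frame literature \cite{Anaj and Rah, ws}, so there is no in-paper argument to measure against. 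The route most natural to the paper's own toolkit would instead pass through Theorem \ref{framegframe}: $\Lambda$ is a $g$-Riesz basis iff $\{\Lambda_i^*e_{i,j}\}_{i\in I,j\in J_i}$ is a Riesz basis for $\h$, a classical Riesz basis is the image $\{Vu_{i,j}\}$ of an orthonormal basis $\{u_{i,j}\}$ under some $V\in GL(\h)$, and setting $\Theta_i^*e_{i,j}:=u_{i,j}$ (so that $\Theta_i=\Lambda_i(V^*)^{-1}$) yields, again by Theorem \ref{framegframe}, a $g$-orthonormal basis with $\Lambda_i=\Theta_iV^*$. Your direct operator-theoretic argument avoids this reduction to the scalar frame case and is self-contained; as a bonus it produces the explicit canonical factorization $U=S_\Lambda^{1/2}$, $\Theta_i=\Lambda_iS_\Lambda^{-1/2}$, exhibiting the $g$-orthonormal factor as the canonical Parseval $g$-frame associated with $\Lambda$.
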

\begin{thm}\cite{ws}\label{framegframe}
 Let for $i\in I$, $\{e_{i,j}\}_{j\in J_i}$ be an orthonormal basis for $\K_i$,
\begin{enumerate}
\item[(i)] $\Lambda$ is a $g$-frame (respectively, $g$-Bessel family, $g$-Riesz basis, $g$-orthonormal basis) if and only if 
$\{\Lambda_i^*e_{i,j}\}_{i\in I, j\in J_i}$ is a frame (respectively, Bessel sequence, Riesz basis, orthonormal basis).
\item[(ii)] $\Lambda$ and $\Theta$ are dual if and only if $\{\Lambda_i^*e_{i,j}\}_{i\in I, j\in J_i}$ and $\{\Theta_i^*e_{i,j}\}_{i\in I, j\in J_i}$ are dual.
\end{enumerate}
\end{thm}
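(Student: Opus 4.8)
The plan is to reduce every assertion about $\Lambda$ to the corresponding assertion about the scalar system $\{\Lambda_i^* e_{i,j}\}_{i\in I,\,j\in J_i}$ by applying a single Parseval identity separately in each coordinate space $\K_i$. The cornerstone is the elementary observation that for every $f\in\h$ and every $i\in I$ we have $\langle\Lambda_i f,e_{i,j}\rangle=\langle f,\Lambda_i^* e_{i,j}\rangle$, so that, since $\{e_{i,j}\}_{j\in J_i}$ is an orthonormal basis of $\K_i$, Parseval's identity gives
\[
\|\Lambda_i f\|^2=\sum_{j\in J_i}|\langle\Lambda_i f,e_{i,j}\rangle|^2=\sum_{j\in J_i}|\langle f,\Lambda_i^* e_{i,j}\rangle|^2 .
\]
Summing over $i\in I$ (all terms are nonnegative, so no convergence subtleties arise) yields $\sum_{i\in I}\|\Lambda_i f\|^2=\sum_{i\in I,\,j\in J_i}|\langle f,\Lambda_i^* e_{i,j}\rangle|^2$ for all $f$. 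With this identity in hand the frame and Bessel equivalences in (i) are immediate, because the defining inequality (\ref{cgframe}) for $\Lambda$ becomes, verbatim and with the \emph{same} constants $A_\Lambda,B_\Lambda$, the frame inequality (\ref{abc}) for $\{\Lambda_i^* e_{i,j}\}$; dropping the lower bound gives the Bessel case.

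For the orthonormal-basis part of (i) I would first substitute basis vectors into the defining relation $\langle\Lambda_i^* g_i,\Lambda_{i'}^* g_{i'}\rangle=\delta_{i,i'}\langle g_i,g_{i'}\rangle$: taking $g_i=e_{i,j}$ and $g_{i'}=e_{i',j'}$ gives $\langle\Lambda_i^* e_{i,j},\Lambda_{i'}^* e_{i',j'}\rangle=\delta_{i,i'}\delta_{j,j'}$, so $\{\Lambda_i^* e_{i,j}\}$ is an orthonormal system, and the Parseval identity $\sum_i\|\Lambda_i f\|^2=\|f\|^2$ then forces it to be a Parseval frame, hence an orthonormal basis. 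The converse is recovered by expanding an arbitrary $g_i=\sum_j\langle g_i,e_{i,j}\rangle e_{i,j}$ and using bilinearity together with the orthonormality of $\{\Lambda_i^* e_{i,j}\}$. The $g$-Riesz case proceeds similarly after writing $g_i=\sum_{j}c_{i,j}e_{i,j}$, so that $\|g_i\|^2=\sum_j|c_{i,j}|^2$ and $\Lambda_i^* g_i=\sum_j c_{i,j}\Lambda_i^* e_{i,j}$; moreover $g$-completeness of $\Lambda$ matches completeness of $\{\Lambda_i^* e_{i,j}\}$, since $\Lambda_i f=0$ for all $i$ is equivalent to $\langle f,\Lambda_i^* e_{i,j}\rangle=0$ for all $i,j$.

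For the duality statement (ii) I would compute, for fixed $i$ and arbitrary $f\in\h$,
\[
\sum_{j\in J_i}\langle f,\Theta_i^* e_{i,j}\rangle\,\Lambda_i^* e_{i,j}=\Lambda_i^*\Big(\sum_{j\in J_i}\langle\Theta_i f,e_{i,j}\rangle e_{i,j}\Big)=\Lambda_i^*\Theta_i f,
\]
using $\langle f,\Theta_i^* e_{i,j}\rangle=\langle\Theta_i f,e_{i,j}\rangle$ and the basis expansion of $\Theta_i f$ in $\K_i$. Summing over $i$ shows that the frame reconstruction $f=\sum_{i,j}\langle f,\Theta_i^* e_{i,j}\rangle\,\Lambda_i^* e_{i,j}$ holds for all $f$ if and only if $\sum_i\Lambda_i^*\Theta_i f=f$, which is precisely the duality of $\Lambda$ and $\Theta$.

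The nonnegative summations and bilinear expansions are routine; the one place demanding genuine care is the $g$-Riesz equivalence, where the $g$-side quantifies over \emph{finite} index sets $I_n$ but \emph{arbitrary} $g_i\in\K_i$ (hence possibly infinitely many nonzero coefficients $c_{i,j}$), whereas the Riesz-basis condition for $\{\Lambda_i^* e_{i,j}\}$ is stated for finite scalar sequences. Reconciling the two bound structures, and transferring the bounds in both directions, requires a density/limiting argument to pass between finitely supported coefficient families and general $g_i$; I expect this to be the main obstacle. The unconditional convergence and interchange of the double sum $\sum_{i,j}$ should also be justified, but that is comparatively standard.
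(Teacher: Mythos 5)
Your proposal is correct and matches the standard argument: the paper itself offers no proof of this theorem (it is quoted from \cite{ws}), and your coordinate-wise reduction via Parseval's identity in each $\K_i$, applied to the induced sequence $\{\Lambda_i^*e_{i,j}\}_{i\in I,\,j\in J_i}$, is exactly the proof given in that reference. The one step you flag as an obstacle --- passing in the $g$-Riesz case from finitely supported scalar coefficients to arbitrary $g_i\in\K_i$ --- is genuinely needed in that direction but routine: the finite-sequence Riesz bounds say the synthesis operator is bounded above and below on finitely supported sequences, and both bounds persist on all of $\ell^2$ by continuity, so your density/limiting plan closes it.
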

In this paper, we generalize some recent results of Christensen et al., \cite{main, chretal} to investigate representations for $g$-frames with bounded operators.
\section{{\textbf Representations of $g$-frames}}
In this section, by generalizing some results of \cite{main, chretal}, we introduce representation for $g$-frames with bounded operators and give some examples of $g$-frames with a representation and without any representations. In the Theorem \ref{MT}, we get sufficient conditions for $g$-frames to have a representation with bounded operator. Also, the Theorem \ref{MT} and the Peoposition \ref{inv} show that for $g$-frames $\Lambda=\{\Lambda_1T^{i-1}:i\in\mathbb{N}\}$, the boundedness of $T$ is equivalent to the invariant of the $\ker T_\Lambda$ under right-shift.
\begin{rem}\label{frem}
Consider a frame
$F=\{f_i\}_{i\in{\mathbb{N}}}=\{T^{i-1}f_1\}_{i\in{\mathbb{N}}}$ for
$\h$ with $T\in{B(\h)}$. For the $g$-frame $\Lambda=\{\Lambda_{i}\in
{B(\h,\mathbb{C})}: i\in{\mathbb{N}}\}$ where 
\begin{align*}
\Lambda_i f=\langle f,f_i\rangle,\quad f\in\h,
\end{align*}
we have
\begin{align*}
\Lambda_{i+1}f=\langle f,f_{i+1}\rangle=\langle
f,Tf_{i}\rangle=\langle T^*f,f_{i}\rangle=\Lambda_{i}T^*f,\quad
f\in{\h}.
\end{align*}
Therefore, there exists $S\in{B(\h)}$ such that $\Lambda_i=\Lambda_1
S^{i-1}, i\in{\mathbb{N}}$. Conversely, if
$\Lambda=\{\Lambda_i\in{B(\h,\mathbb{C})}:
i\in{\mathbb{N}}\}=\{\Lambda_{1}T^{i-1}:i\in{\mathbb{N}}\}$ for
$T\in{B(\h)}$, then
$F=\{f_{i}\}_{i\in{\mathbb{N}}}=\{(T^*)^{i-1}f_1\}_{i\in{\mathbb{N}}}$,
where $\Lambda_{i}f=\langle f,f_{i}\rangle, i\in{\mathbb{N}},
f\in{\h}$.
\end{rem}
We are motivated to study $g$-frames $\Lambda=\{\Lambda_i\in B(\h,\K):i\in\mathbb{N}\},$ where
$\Lambda_i=\Lambda_1 T^{i-1}$ with $T\in{B(\h)}$.
\begin{defn}
We say that a $g$-frame $\Lambda=\{\Lambda_i\in B(\h,\K):i\in\mathbb{N}\}$ has a representation, if there is a
$T\in{B(\h)}$ such that $\Lambda_{i}=\Lambda_1 T^{i-1},
i\in{\mathbb{N}}$. In the affrimative case, we say that $\Lambda$ is
represented by $T$.
\end{defn}
In the following, we give some $g$-frames that have a representation.
\begin{exa}\label{tless1}
\begin{enumerate}
\item[(i)] The $g$-frame $\Lambda=\{\Lambda_{i}\in{GL(\h)}: i=1,2\}$
is represented by $\Lambda_1^{-1}\Lambda_2$.
\item[(ii)] The tight $g$-frame $\Lambda=\{\Lambda_{i}\in{B(\h)}:
i\in{\mathbb{N}}\}$ with
$\Lambda_{i}=\frac{2^{i-1}}{3^{i-2}}Id_{\h}$ is represented by
$\frac{2}{3}Id_{\h}$.
\item[(iii)] Let
$F=\{f_{i}\}_{i\in{\mathbb{N}}}=\{T^{i-1}f_1\}_{i\in{\mathbb{N}}}$
be a frame for $\h$. Then the $g$-frame
$\Lambda=\{\Lambda_{i}\in{B(\h,\mathbb{C}^2)}: i\in{\mathbb{N}}\}$
with $\Lambda_{i}f=\big(\langle f,f_{i}\rangle,\langle
f,f_{i+1}\rangle\big)$, \\$f\in{\h}$, is represented by $T^*$.
\end{enumerate}
\end{exa}
Now, we give a $g$-frame without any representations.
\begin{exa}
Consider the tight $g$-frame
$\Lambda=\{\Lambda_{n}\in{B(\mathbb{C})}: n\in{\mathbb{N}}\}$ with
$\Lambda_{n}=\frac{1}{n^4+1}Id_{\mathbb{C}}$. Since
$\Lambda_1=\frac{1}{2}Id_{\mathbb{C}}$ and
$\Lambda_2=\frac{1}{17}Id_{\mathbb{C}}$, the $g$-frame $\Lambda$ has
not any representation.
\end{exa}
By generalizing a result of the \cite{main}, the following theorem give sufficient conditions for $g$-frame 
$\Lambda=\{\Lambda_i\in B(\h,\K):i\in\mathbb{N}\}$ to have a representation. 
\begin{thm}\label{MT}
Let $\Lambda=\{\Lambda_{i}\in{B(\h,\K)}: i\in{\mathbb{N}}\}$ be a
$g$-frame that for any finite set $I_{n}\subset{\mathbb{N}}$, and
$\{g_{i}\}_{i\in{I_{n}}}\subset\K$,
$\sum_{i\in{I_{n}}}\Lambda_{i}^{*}g_{i}=0$ implies $g_{i}=0$ for
any $i\in{I_{n}}$. Suppose that $\ker T_{\Lambda}$ is
invariant under the right-shift operator $\mathcal{T}$. Then
$\Lambda$ is represented by  $T\in{B(\h)}$, where $\|T\|\leq\sqrt{B_{\Lambda}A_{\Lambda}^{-1}}$.
\end{thm}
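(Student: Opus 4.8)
The plan is to build the representing operator $T$ by first constructing its adjoint on the range of the synthesis maps, in the spirit of Proposition \ref{mnref}. Set
$V=\Span\{\Lambda_i^* g: i\in\mathbb{N},\, g\in\K\}$ and define $U:V\to\h$ by $U(\Lambda_i^* g)=\Lambda_{i+1}^* g$, extended linearly. The operator we are after will then be $T:=U^*$: the relation $U\Lambda_i^*=\Lambda_{i+1}^*$ is, after taking adjoints, equivalent to $\Lambda_{i+1}=\Lambda_i T$, and an immediate induction yields $\Lambda_i=\Lambda_1 T^{i-1}$ together with $\|T\|=\|U\|$. So the whole problem reduces to showing that $U$ is well defined and extends to a bounded operator with the stated norm bound.

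First I would verify that $U$ is well defined and densely defined. A generic element of $V$ is a finite sum $\sum_{i\in I_n}\Lambda_i^* g_i$; if two such expressions coincide, their difference $\sum_{i\in I_n}\Lambda_i^*(g_i-g_i')$ vanishes, so the hypothesis on finite dependencies forces $g_i=g_i'$ for all $i$, and hence $\sum_i\Lambda_{i+1}^* g_i$ is unambiguous. Thus $U$ is a well-defined linear map on $V$. Moreover $V$ is dense in $\h$, since its orthogonal complement consists of those $f$ with $\langle \Lambda_i f,g\rangle=0$ for all $i$ and $g$, i.e. $\Lambda_i f=0$ for all $i$, which is $\{0\}$ because every $g$-frame is $g$-complete. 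The structural identity driving everything is that $U$ intertwines $T_\Lambda$ with the right-shift: for any finitely supported $c=\{c_i\}$ one computes $U T_\Lambda c=\sum_i \Lambda_{i+1}^* c_i = T_\Lambda \mathcal{T} c$.

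The main obstacle is the boundedness of $U$, and this is exactly where the invariance of $\ker T_\Lambda$ under $\mathcal{T}$ enters. Fix $v\in V$ and write $v=T_\Lambda c$ with $c$ finitely supported. The canonical-dual coefficient sequence $d:=T_\Lambda^* S_\Lambda^{-1}v$ also satisfies $T_\Lambda d= T_\Lambda T_\Lambda^* S_\Lambda^{-1}v= S_\Lambda S_\Lambda^{-1}v=v$, so $c-d\in\ker T_\Lambda$; the invariance hypothesis then gives $\mathcal{T}(c-d)\in\ker T_\Lambda$, whence $Uv= T_\Lambda\mathcal{T}c= T_\Lambda\mathcal{T}d$. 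This passage from the finite preimage $c$ to the minimal-norm preimage $d$ is the crucial, and I expect most delicate, step, and it rests precisely on shift-invariance. Now I estimate $\|Uv\|=\|T_\Lambda\mathcal{T}d\|\le \|T_\Lambda\|\,\|\mathcal{T}\|\,\|d\|$. Since $\mathcal{T}$ is a shift, $\|\mathcal{T}\|\le 1$; since $S_\Lambda=T_\Lambda T_\Lambda^*\le B_\Lambda Id_{\h}$, we get $\|T_\Lambda\|\le\sqrt{B_\Lambda}$; and $\|d\|^2=\langle T_\Lambda^* S_\Lambda^{-1}v, T_\Lambda^* S_\Lambda^{-1}v\rangle=\langle v,S_\Lambda^{-1}v\rangle\le A_\Lambda^{-1}\|v\|^2$, using $S_\Lambda\ge A_\Lambda Id_{\h}$. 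Combining the three bounds gives $\|Uv\|\le\sqrt{B_\Lambda A_\Lambda^{-1}}\,\|v\|$, so $U$ extends to a bounded operator on $\h$ with $\|U\|\le\sqrt{B_\Lambda A_\Lambda^{-1}}$; setting $T:=U^*$ finishes the proof. Equivalently, one may verify directly that the bounded operator $T_\Lambda\mathcal{T}T_\Lambda^* S_\Lambda^{-1}$ coincides with $U$ on the dense set $V$, which packages the same computation more compactly.
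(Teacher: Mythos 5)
Your proof is correct, and while it shares the paper's overall skeleton (define the shift-type map sending $\Lambda_i^*g$ to $\Lambda_{i+1}^*g$ on the span of the ranges $\Lambda_i^*(\K)$, get well-definedness from the finite-independence hypothesis, use shift-invariance of $\ker T_\Lambda$ to pass to better coefficients, bound the map, extend, and take the adjoint), the technical execution is genuinely different. The paper transfers everything to the scalar frame $F=\{\Lambda_i^*e_j\}_{i\in\mathbb{N},j\in J}$ through an orthonormal basis of $\K$ via Theorem \ref{framegframe}, splits a finite coefficient family as $c_{ij}=d_{ij}+r_{ij}$ with $d\in\ker T_F$ and $r\in\Ran T_F^*$, and quotes \cite[Lemma 5.5.5]{c4} for the lower estimate $A_\Lambda\sum_{i,j}|r_{ij}|^2\leq\big\|\sum_{i,j}r_{ij}\Lambda_i^*e_j\big\|^2$. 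You instead stay at the $g$-frame/operator level: your $d=T_\Lambda^*S_\Lambda^{-1}v$ is precisely the orthogonal projection of $c$ onto $(\ker T_\Lambda)^\perp=\overline{\Ran T_\Lambda^*}$ written in closed form, and the two frame inequalities are replaced by the operator bounds $\|T_\Lambda\|\leq\sqrt{B_\Lambda}$ and $\|T_\Lambda^*S_\Lambda^{-1}v\|^2=\langle S_\Lambda^{-1}v,v\rangle\leq A_\Lambda^{-1}\|v\|^2$, so you reprove the needed case of that lemma on the spot rather than citing it. This buys three things: you avoid the double-index bookkeeping and the silent identification of $\ker T_F\subseteq l^2$ with $\ker T_\Lambda\subseteq\big(\sum_{i\in\mathbb{N}}\oplus\K\big)_{l^2}$ — a point where the paper is a little loose, since the hypothesis concerns the shift in the index $i$ alone, not any shift of the pair $(i,j)$; you make explicit the density of $V$ (needed to extend the operator to all of $\h$, and only implicit in the paper, where it follows from the lower $g$-frame bound); and you obtain the closed formula $\bar{U}=T_\Lambda\mathcal{T}T_\Lambda^*S_\Lambda^{-1}$ for the extension, the $g$-frame analogue of the formula $Tf=\sum_{i\in\mathbb{N}}\langle f,g_i\rangle f_{i+1}$ from \cite{chretal}. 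What the paper's route buys in exchange is the direct connection to the scalar-frame machinery, so that known frame results can be cited rather than re-derived; your argument, by contrast, is self-contained and would adapt more readily to settings where no convenient orthonormal basis reduction is available.
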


\begin{proof}
Let $\{e_{i}\}_{i\in{I}}$ be an orthonormal basis for $\K$. We define the linear map 
$S: \Span\{\Lambda_{i}^*(\K)\}_{i\in{\mathbb{N}}}\rightarrow\Span\{\Lambda_{i}^*(\K)\}_{i\in{\mathbb{N}}}$ with 
\begin{align*}
S(\Lambda_{i}^* e_{j})=\Lambda_{i+1}^* e_{j}.
\end{align*}
For any finite index sets $I_n\subset{\mathbb{N}}$ and $J_{m}\subset
I$,
$\sum_{i\in{I_n},j\in{J_m}}c_{ij}\Lambda_{i}^*e_j=\sum_{i\in{I_n}}\Lambda_{i}^*\big(\sum_{j\in{J_m}}c_{ij}e_j\big)=0$
implies $c_{ij}=0$ for $i\in{I_n}$, $j\in{J_m}$. Therefore, $S$ is
well-defineded. Now, we show that $S$ is bounded. Let
$f=\sum_{i\in{I_n},j\in{J_m}}c_{ij}\Lambda_{i}^*e_j$ for
$c_{ij}\in\ell^2(\mathbb{C},\mathbb{N})$ with $c_{ij}=0$, $i\notin I_n$ or
$j\notin J_m$.\\
By the Theorem \ref{framegframe}, $F=\{\Lambda_i^*
e_j\}_{i\in{\mathbb{N}},j\in{J}}$ is a frame for $\h$ with lower and
upper frame bounds $A_{\Lambda}$ and $B_{\Lambda}$, respectively. We
can write $c_{ij}=d_{ij}+r_{ij}$ with $d_{ij}\in{\ker T_F}$ and
$r_{ij}\in{\Ran T_{F}^*}$. From
$\sum_{i\in{I_{n}}}\Lambda_{i}^*\big(\sum_{j\in{J_{m}}}d_{ij}e_j\big)=\sum_{i\in{I_n},j\in{J_m}}d_{ij}\Lambda_i^*e_j=0$,
we conclude that
$\sum_{i\in{I_n},j\in{J_m}}d_{ij}\Lambda_{i+1}^*e_j=0$. The same as
the proof of \cite{chretal}, we have
\begin{align*}
\|Sf\|^2=\big\|\sum_{i\in{I_n},j\in{J_m}}c_{ij}\Lambda_{i+1}^*e_j\big\|^2
&=\big\|\sum_{i\in{I_n},j\in{J_m}}r_{ij}\Lambda_{i+1}^*e_j\big\|^2
\\&\leq B_{\Lambda}\sum_{i\in{I_n},j\in{J_m}}|r_{ij}|^2.
\end{align*}
Since $\{r_{ij}\}_{i\in{I_n},j\in{J_m}}\in{(\ker T_{\Lambda})^{\perp}}$, by the \cite[Lemma 5.5.5]{c4} we have
\begin{align*}
A_{\Lambda}\sum_{i\in{I_n},j\in{J_m}}|r_{ij}|^2\leq\|\sum_{i\in{I_n},j\in{J_m}}r_{ij}\Lambda_i^*e_j\|^2.
\end{align*}
Therefore
\begin{align*}
\|Sf\|^2\leq B_{\Lambda}A_{\Lambda}^{-1}\big\|\sum_{i\in{I_n},j\in{J_m}}r_{ij}\Lambda_i^*e_j\big\|^2
&=B_{\Lambda}A_{\Lambda}^{-1}\big\|\sum_{i\in{I_n},j\in{J_m}}(d_{ij}+r_{ij})\Lambda_i^*e_j\big\|^2
\\&=B_{\Lambda}A_{\Lambda}^{-1}\big\|\sum_{i\in{I_n},j\in{J_m}}c_{ij}\Lambda_i^*e_j\big\|^2
\\&=B_{\Lambda}A_{\Lambda}^{-1}\|f\|^2.
\end{align*}
So, $S$ is bounded and can be extended to $\bar{S}\in{B(\h)}$. It is
obvious that $\Lambda$ is represented by $T=\bar{S}^*$ and $\|T\|\leq\sqrt{B_\Lambda A_\Lambda^{-1}}$.
\end{proof}
\begin{cor}\label{oth}
Every $g$-orthonormal basis has a representation.
\end{cor}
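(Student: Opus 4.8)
The goal is Corollary \ref{oth}: every $g$-orthonormal basis has a representation. The plan is to deduce this directly from Theorem \ref{MT} by verifying its two hypotheses for an arbitrary $g$-orthonormal basis $\Lambda=\{\Lambda_i\in B(\h,\K):i\in\mathbb{N}\}$. First I would recall from Theorem \ref{framegframe}(i) that if $\{e_{i,j}\}_{j\in J_i}$ is an orthonormal basis for each $\K_i$, then $\{\Lambda_i^* e_{i,j}\}_{i\in\mathbb{N},j\in J_i}$ is an orthonormal basis for $\h$; in particular $\Lambda$ is a $g$-frame with $A_\Lambda=B_\Lambda=1$, so the $g$-frame hypothesis of Theorem \ref{MT} is satisfied and the bound $\|T\|\leq\sqrt{B_\Lambda A_\Lambda^{-1}}=1$ will come for free.

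Next I would check the finite linear independence condition: if $I_n\subset\mathbb{N}$ is finite, $\{g_i\}_{i\in I_n}\subset\K$, and $\sum_{i\in I_n}\Lambda_i^* g_i=0$, then each $g_i=0$. This follows from the $g$-orthonormal basis relation $\langle\Lambda_i^* g_i,\Lambda_j^* g_j\rangle=\delta_{i,j}\langle g_i,g_j\rangle$: taking the inner product of $\sum_{i\in I_n}\Lambda_i^* g_i=0$ with a fixed $\Lambda_k^* g_k$ isolates the $k$-th term and yields $\|g_k\|^2=0$. So the injectivity-on-finite-sums hypothesis holds.

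The remaining hypothesis is that $\ker T_\Lambda$ is invariant under the right-shift $\mathcal{T}$. Here I would exploit the fact that for a $g$-orthonormal basis the synthesis operator $T_\Lambda:\big(\sum_{i}\oplus\K_i\big)_{l^2}\to\h$ is injective (indeed unitary): the orthonormality relations give $\|T_\Lambda(\{g_i\})\|^2=\sum_i\|g_i\|^2$, so $\ker T_\Lambda=\{0\}$. Since $\{0\}$ is trivially invariant under $\mathcal{T}$, all hypotheses of Theorem \ref{MT} are met, and the conclusion is immediate.

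The main point to get right is the bookkeeping between the scalar-frame right-shift used in Theorem \ref{MT} and the $g$-frame setting, but since $\ker T_\Lambda=\{0\}$ the invariance condition is vacuous and no real obstacle arises; the corollary is essentially a verification that a $g$-orthonormal basis satisfies every premise of Theorem \ref{MT}. Thus $\Lambda$ is represented by some $T\in B(\h)$ with $\|T\|\leq 1$, which, combined with the general lower bound $\|T\|\geq 1$ valid whenever a representation exists, forces $\|T\|=1$.
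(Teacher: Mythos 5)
Your proof is correct and follows essentially the same route as the paper: verify the finite linear independence hypothesis via the $g$-orthonormality relations, observe that $T_\Lambda$ is isometric so $\ker T_\Lambda=\{0\}$ (making right-shift invariance vacuous), and invoke Theorem \ref{MT}; your variant of taking the inner product with a single term $\Lambda_k^*g_k$ is the same computation the paper does with the full sum. One caveat on your closing sentence: you appeal to a ``general lower bound $\|T\|\geq 1$ valid whenever a representation exists,'' but for index set $\mathbb{N}$ no such bound holds --- the paper's Example \ref{tless1}(ii) exhibits a $g$-frame represented by $\frac{2}{3}Id_{\h}$, and the paper explicitly remarks that $1\leq\|T\|$ can fail over $\mathbb{N}$; that lower bound is established only for $\mathbb{Z}$-indexed representations. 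The conclusion $\|T\|=1$ is nevertheless true here, since the operator $S$ constructed in the proof of Theorem \ref{MT} sends the orthonormal basis $\{\Lambda_i^*e_{j}\}$ injectively into itself and hence extends to an isometry, so $\|T\|=\|\bar{S}^*\|=1$; but this needs the argument just given rather than the one you cited. Since the corollary asserts only the existence of a representation, this slip does not affect the validity of your proof of the stated result.
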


\begin{proof}
For every finite sequence $\{g_i\}_{i\in{I_n}}\subset \K$, we have
\begin{align*}
\big\|\sum_{i\in{I_n}}\Lambda_i^*g_i\big\|^2
=\big\langle\sum_{i\in{I_n}}\Lambda_i^*g_i,\sum_{j\in{I_n}}\Lambda_j^*g_j\big\rangle
&=\sum_{i\in{I_n}}\sum_{j\in{I_n}}\langle\Lambda_i^*g_i,\Lambda_j^*g_j\rangle
\\&=\sum_{i\in{I_n}}\langle g_i,g_i\rangle=\sum_{i\in{I_n}}\|g_i\|^2.
\end{align*}
So $\sum_{i\in{I_n}}\Lambda_i^*g_i=0$ implies $g_i=0$ for any
$i\in{I_n}$. Similarly, we have $\ker T_{\Lambda}=\{0\}$, that is invariant under
right-shift operator. Then, by the Theorem \ref{MT} the proof is
completed.
\end{proof}
\begin{rem}\label{gl}
Consider a $g$-frame $\Lambda=\{\Lambda_{i}\in B(\h,\K):i\in \mathbb{N}\}$ that is represented by $T$. For $S\in GL(\h)$, the family 
$\Lambda S=\{\Lambda_{i} S\in B(\h,\K):i\in \mathbb{N}\}$ is a $g$-frame \cite[Corollary 2.26]{AnajFR},, that is represented by $S^{-1}TS$.
\end{rem}  
\begin{cor}\label{grb}
Every $g$-Riesz basis has a representation. 
\end{cor}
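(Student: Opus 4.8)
The goal is to show that every $g$-Riesz basis has a representation. The natural strategy is to reduce this to the hypotheses of Theorem \ref{MT}, since that theorem already packages the sufficient conditions for a representation. So the plan is to verify two things for a $g$-Riesz basis $\Lambda=\{\Lambda_i\in B(\h,\K):i\in\mathbb{N}\}$: first, the finite linear-independence condition that $\sum_{i\in I_n}\Lambda_i^*g_i=0$ forces each $g_i=0$; and second, that $\ker T_\Lambda$ is invariant under the right-shift operator $\mathcal{T}$.

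First I would handle the linear-independence hypothesis. This is immediate from the definition of a $g$-Riesz basis: the lower inequality $A_\Lambda\sum_{i\in I_n}\|g_i\|^2\leq\|\sum_{i\in I_n}\Lambda_i^*g_i\|^2$ shows that if the right-hand side vanishes then $\sum_{i\in I_n}\|g_i\|^2=0$, hence $g_i=0$ for all $i\in I_n$. So that condition of Theorem \ref{MT} is satisfied with no real work.

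The main obstacle is the second hypothesis: showing $\ker T_\Lambda$ is invariant under $\mathcal{T}$. Here I would exploit the structural description available for $g$-Riesz bases. By Theorem \ref{reisoth}, there is a $g$-orthonormal basis $\Theta$ and $U\in GL(\h)$ with $\Lambda_i=\Theta_i U$ for all $i$. By Corollary \ref{oth}, the $g$-orthonormal basis $\Theta$ has a representation, say $\Theta_i=\Theta_1 V^{i-1}$ for some $V\in B(\h)$. Then $\Lambda_i=\Theta_i U=\Theta_1 V^{i-1}U$, and writing $\Lambda_1=\Theta_1 U$ one checks $\Lambda_i=\Lambda_1 U^{-1}V^{i-1}U=\Lambda_1(U^{-1}VU)^{i-1}$, so $\Lambda$ is represented by $T=U^{-1}VU\in B(\h)$. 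In fact this is exactly the content of Remark \ref{gl}: $\Theta$ represented by $V$ gives $\Theta U=\Lambda$ represented by $U^{-1}VU$. This route sidesteps the kernel-invariance verification entirely by assembling the representation directly from Corollary \ref{oth} and Remark \ref{gl}, which is cleaner than invoking Theorem \ref{MT} a second time.

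Thus the cleanest proof I would write is: apply Theorem \ref{reisoth} to obtain $\Theta$ and $U\in GL(\h)$ with $\Lambda_i=\Theta_i U$; apply Corollary \ref{oth} to get that $\Theta$ is represented by some $V\in B(\h)$; then apply Remark \ref{gl} with $S=U$ to conclude that $\Lambda=\Theta U$ is represented by $U^{-1}VU$. The only point needing a moment's care is confirming that the factorization $\Lambda_i=\Theta_i U$ from Theorem \ref{reisoth} is indeed the same kind of right-multiplication by an invertible operator that Remark \ref{gl} requires, which it is. If one instead insists on going through Theorem \ref{MT} directly, the harder step would be verifying $\mathcal{T}(\ker T_\Lambda)\subseteq\ker T_\Lambda$, for which I would use the frame $\{\Lambda_i^*e_{i,j}\}$ associated to $\Lambda$ via Theorem \ref{framegframe} and the fact that a $g$-Riesz basis corresponds to a Riesz basis, whose synthesis operator is injective so that $\ker T_\Lambda=\{0\}$ is trivially shift-invariant.
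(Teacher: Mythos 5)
Your final argument is correct and is essentially the paper's own proof: the paper establishes Corollary \ref{grb} precisely by combining Theorem \ref{reisoth}, Corollary \ref{oth}, and Remark \ref{gl}, exactly as you do, including the conjugation $T=U^{-1}VU$. Your preliminary detour through the hypotheses of Theorem \ref{MT} (and the observation that $\ker T_\Lambda=\{0\}$ would make shift-invariance trivial) is a valid alternative route but is not needed once you assemble the representation directly.
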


\begin{proof}
By the Theorem \ref{reisoth}, Proposition \ref{oth} and Remark \ref{gl}, the proof is completed.
\end{proof}
Now, we give an example to show that the converse of the Theorem \ref{MT} is not satisfied.
\begin{exa}\label{e1}
Consider the tight $g$-frame $\Lambda=\{\Lambda_{i}\in B\big(l^2(\h,\mathbb{N})\big):i\in \mathbb{N}\}$ with $\Lambda_i=(\frac{1}{2})^{i-1}Id_{l^2(\h,\mathbb{N})}$. It is obvious that $\Lambda$ is represented by $\frac{1}{2}Id_{l^2(\h,\mathbb{N})}$, but 
$\Lambda_1^*(\frac{1}{2}e_1)+\Lambda_2^*(-e_1)=0$ for $e_1=(1,0,0,...)$.
\end{exa}
\begin{prop}\label{inv}
Let the $g$-frame $\Lambda=\{\Lambda_{i}\in B(\h,\K):i\in \mathbb{N}\}$ be represented by $T$. Then $\ker T_\Lambda$ is invariant under right-shift $\mathcal{T}$.
\end{prop}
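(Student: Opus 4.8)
The plan is to compute the action of the synthesis operator $T_\Lambda$ directly on a right-shifted element of $\ker T_\Lambda$ and verify that the result is again zero. So I would fix an arbitrary $\{g_i\}_{i\in\mathbb{N}}\in\ker T_\Lambda$, meaning $\sum_{i\in\mathbb{N}}\Lambda_i^* g_i=0$, and first record what the right-shift produces: $\mathcal{T}(\{g_i\}_{i\in\mathbb{N}})=\{h_i\}_{i\in\mathbb{N}}$ with $h_1=0$ and $h_i=g_{i-1}$ for $i\geq 2$. Applying the synthesis operator and reindexing then gives $T_\Lambda\big(\mathcal{T}(\{g_i\})\big)=\sum_{i\in\mathbb{N}}\Lambda_{i+1}^* g_i$.

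The key algebraic step is to bring in the hypothesis that $\Lambda$ is represented by $T$. From $\Lambda_i=\Lambda_1 T^{i-1}$ for all $i$ we get $\Lambda_{i+1}=\Lambda_1 T^{i}=(\Lambda_1 T^{i-1})T=\Lambda_i T$, and therefore $\Lambda_{i+1}^*=T^*\Lambda_i^*$ for every $i\in\mathbb{N}$. Substituting this identity into the previous expression converts it into $T_\Lambda\big(\mathcal{T}(\{g_i\})\big)=\sum_{i\in\mathbb{N}}T^*\Lambda_i^* g_i$.

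Finally, I would factor the bounded operator $T^*$ out of the series. This is the only point needing a word of justification: since $\{g_i\}\in\big(\sum_{i}\oplus\K\big)_{l^2}$ and $\Lambda$ is in particular a $g$-Bessel family, the series $\sum_{i}\Lambda_i^* g_i$ converges in $\h$ (equivalently, $T_\Lambda$ is bounded), so continuity of $T^*$ permits the interchange $\sum_{i}T^*\Lambda_i^* g_i=T^*\big(\sum_{i}\Lambda_i^* g_i\big)=T^* T_\Lambda(\{g_i\})=T^*(0)=0$. Hence $\mathcal{T}(\{g_i\})\in\ker T_\Lambda$, which is exactly the claimed invariance. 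I do not anticipate any real obstacle here; the statement is essentially the adjoint counterpart of the representation identity $\Lambda_{i+1}=\Lambda_i T$, and the only care required is invoking boundedness of $T_\Lambda$ to commute $T^*$ with the infinite sum.
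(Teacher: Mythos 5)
Your proposal is correct and follows essentially the same route as the paper's proof: apply $T_\Lambda$ to the shifted sequence, use $\Lambda_{i+1}^*=T^*\Lambda_i^*$ from the representation $\Lambda_{i+1}=\Lambda_i T$, and pull $T^*$ out of the convergent series. Your explicit justification for commuting $T^*$ with the infinite sum (continuity of $T^*$ plus boundedness of $T_\Lambda$) is a detail the paper leaves implicit, but it is the same argument.
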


\begin{proof}
For any $\{g_i\}_{i\in\mathbb{N}}\in\ker T_\Lambda$, we have 
\begin{align*}
T_\Lambda\mathcal{T}\{g_i\}_{i\in\mathbb{N}}
=\sum_{i\in\mathbb{N}}\Lambda_{i+1}^*g_i
=\sum_{i\in\mathbb{N}}T^*\Lambda_i^*g_i
=T^*\Big(\sum_{i\in\mathbb{N}}\Lambda_i^*g_i\Big)=0.
\end{align*}
\end{proof}
The following Proposition shows that the converse of the Theorem \ref{MT} is satisfied for one-dimentional Hilbert space $\K$.
\begin{prop}
Let $\K$ be a one-dimentional Hilber space and $\Lambda=\{\Lambda_{i}\in B(\h,\K):i\in \mathbb{N}\}$ be represented by $T$. 
If for finite index set $I_n\subset\mathbb{N}$ and $\{g_i\}_{i\in I_n}\subset \K$, we have $\sum_{i\in I_n}\Lambda_i^*g_i=0,$ 
then $g_i=0$ for any $i\in I_n$.
\end{prop}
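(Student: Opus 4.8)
The plan is to push the whole problem through to ordinary frames and then run a polynomial/span argument whose contradiction comes from the infinite dimensionality of $\h$. I first record that the conclusion really does need $\dim\h=\infty$: taking $\h=\K=\mathbb{C}$ and $\Lambda_i=(1/2)^{i-1}Id_{\mathbb{C}}$ gives a tight $g$-frame represented by $\frac{1}{2}Id_{\mathbb{C}}$ for which $\Lambda_1^*e+\Lambda_2^*(-2e)=0$ with $e=1$, yet $e,-2e\neq0$. So I work under the standing assumption (as in Proposition \ref{mnref}) that $\h$ is infinite dimensional, which is the load-bearing hypothesis.

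First I would fix a unit vector $e\in\K$, which exists because $\dim\K=1$, and set $f_i:=\Lambda_i^* e$. Since $\{e\}$ is an orthonormal basis of $\K$, Theorem \ref{framegframe}(i) tells me that $F=\{f_i\}_{i\in\mathbb{N}}$ is a frame for $\h$; in particular $\overline{\Span}\{f_i\}=\h$. Writing each $g_i=c_i e$ with $c_i\in\mathbb{C}$, the hypothesis $\sum_{i\in I_n}\Lambda_i^* g_i=0$ reads $\sum_{i\in I_n}c_i f_i=0$, and the desired conclusion $g_i=0$ is exactly the linear independence of the finite family $\{f_i\}_{i\in I_n}$. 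Finally, taking adjoints in $\Lambda_i=\Lambda_1 T^{i-1}$ yields $f_i=(T^*)^{i-1}f_1$, so with $R:=T^*\in B(\h)$ the frame takes the form $F=\{R^{i-1}f_1\}_{i\in\mathbb{N}}$.

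Then I would argue by contradiction. Assume $\sum_{i\in I_n}c_i f_i=0$ with not all $c_i$ zero and put $m:=\max\{i\in I_n:c_i\neq0\}$. Rewriting this as $\sum_{i=1}^m c_i R^{i-1}f_1=0$ with $c_m\neq0$ gives $R^{m-1}f_1=-c_m^{-1}\sum_{i=1}^{m-1}c_i f_i\in V$, where $V:=\Span\{f_1,\dots,f_{m-1}\}$. Since $R f_i=f_{i+1}$, one has $RV=\Span\{f_2,\dots,f_m\}\subseteq V$, so $V$ is $R$-invariant and contains $f_1$; an immediate induction then gives $R^k f_1\in V$ for all $k\ge0$. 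Hence $\Span\{f_i:i\in\mathbb{N}\}\subseteq V$, and since $V$ is finite dimensional, hence closed, we get $\h=\overline{\Span}\{f_i\}\subseteq V$, contradicting $\dim\h=\infty$. Therefore all $c_i=0$, i.e. $g_i=0$ for every $i\in I_n$.

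The main obstacle is conceptual rather than computational: the span argument only yields a contradiction because $\h$ is infinite dimensional, and (as the one-line example above shows) the statement is simply false otherwise, so the proof must be arranged so that this hypothesis is what closes the argument. Everything else — the passage to $F$ through the single unit vector $e$ of the one-dimensional $\K$, and the invariance induction — is routine.
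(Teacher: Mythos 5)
Your proof is correct, and its opening move is exactly the paper's: fix the unit vector $e$ spanning $\K$, use Theorem \ref{framegframe} to pass to the frame $F=\{\Lambda_i^*e\}_{i\in\mathbb{N}}=\{(T^*)^{i-1}\Lambda_1^*e\}_{i\in\mathbb{N}}$, and write $g_i=c_ie$ so that the claim becomes finite linear independence of $F$. The routes then diverge. The paper disposes of linear independence by citing Proposition \ref{mnref}, whereas you prove it from scratch: a nontrivial dependence with top index $m$ puts $f_m$ into $V=\Span\{f_1,\dots,f_{m-1}\}$, which is then invariant under $R=T^*$ and contains $f_1$, so the whole orbit lies in $V$ and $\h=\overline{\Span}\{f_i\}$ would be finite dimensional. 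This buys two things. First, self-containedness: the citation in the paper is a loose fit, since Proposition \ref{mnref} concerns frame sequences indexed by $\mathbb{Z}$ with the shift $f_i\mapsto f_{i+1}$ extending to an \emph{invertible} operator on the span, neither of which literally matches the present situation (index set $\mathbb{N}$, $T$ merely bounded); your argument is in effect the proof of the correct $\mathbb{N}$-indexed analogue. Second, and more substantively, you identified that the statement requires $\h$ to be infinite dimensional: your example $\h=\K=\mathbb{C}$, $\Lambda_i=(1/2)^{i-1}Id_{\mathbb{C}}$, represented by $\frac{1}{2}Id_{\mathbb{C}}$, satisfies every stated hypothesis yet has $\Lambda_1^*e-2\Lambda_2^*e=0$, so the proposition as printed fails in finite dimensions. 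The paper's proof silently inherits this hypothesis from Proposition \ref{mnref} (whose statement assumes the frame sequence spans an infinite dimensional subspace) without verifying it; you made the hypothesis explicit and showed it is indispensable, which is a genuine repair rather than a cosmetic one. One small point worth keeping in your write-up is the degenerate case $m=1$ (there $V=\{0\}$, forcing every $f_i=0$ and contradicting the frame property), which your induction does in fact cover.
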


\begin{proof}
Let $\{e_1\}$ be a basis fo $\K$. By the Theorem \ref{framegframe}, the sequence $F=\{\Lambda_i^*e_1\}_{i\in \mathbb{N}}=\{(T^*)^{i-1}\Lambda_1^*e_1\}_{i\in \mathbb{N}}$ is a frame for $\h$, and so by the Proposition \ref{mnref}, $F$ is linearly independent. We have
\begin{align*}
0=\sum_{i\in I_n}\Lambda_i^*g_i
=\sum_{i\in I_n}\Lambda_i^*(\alpha_i e_1)
=\sum_{i\in I_n}\alpha_i\Lambda_i^*e_1,
\end{align*}
therefore, for any $i\in I_n$, $\alpha_i=0$ and so $g_i=0$.
\end{proof}
\begin{rem}
The above Proposition shows that for one dimentional Hilbert space $\K$ with basis $\{e_1\}$, when a $g$-frame $\Lambda=\{\Lambda_{i}\in B(\h,\K):i\in \mathbb{N}\}$ has a representation, then the frame $\{\Lambda_i^*e_1\}_{i\in \mathbb{N}}$ has a representation. For finite dimentional Hilbert space $\K$ with orthonormal basis $\{e_j\}_{j=1}^n$, when a $g$-frame $\Lambda=\{\Lambda_{i}\in B(\h,\K):i\in \mathbb{N}\}$
is represented by $T$, then the frame $F=\{\Lambda_i^*e_j, j=1,..., n\}_{i\in \mathbb{N}}$ can be represented by $T^*$ and finite vectors $\{\Lambda_1^*e_1, ... , \Lambda_1^*e_n\}$, i.e., $F=\big\{(T^*)^{i-1}\Lambda_1^*e_j, j=1,..., n\big\}_{i\in \mathbb{N}}$, then, it can be worked on $g$-frames that be represented with finite family of operators.
But, the Example \ref{e1} shows that for infinite dimentional Hilbert space $K=l^2(\h,\mathbb{N})$ with orthonormal basis $\{e_j\}_{j\in I}$, it does not happen, i.e., a $g$-frame $\Lambda$ has a representation and the frame $\{\Lambda_i^*e_j\}_{i,j\in \mathbb{N}}$ does not have. Note that for a $g$-Reisz basis 
$\Lambda=\{\Lambda_{i}\in B(\h,\K):i\in \mathbb{N}\}$, the sequence $F=\{\Lambda_i^*e_j\}_{i\in\mathbb{N},j\in I}$ is a Riesz basis \cite{ws}. By the Corollary \ref{grb} and \cite[Example 2.2]{chretal}, both of the $\Lambda$ and $F$ have representations. What is the relation between these two representations (open problem)?
\end{rem} 
Now, we want to discuss the concept of representation for $g$-frames with index set $\mathbb{Z}$.
\begin{defn}
We say that a $g$-frame $\Lambda=\{\Lambda_i\in B(\h,\K):i\in\mathbb{Z}\}$ has a representation, if there is a $T\in{GL(\h)}$ such that $\Lambda_{i}=\Lambda_0 T^i,i\in{\mathbb{Z}}$. In the affrimative case, we say that $\Lambda$ is
represented by $T$.
\end{defn}
\begin{exa}
Consider the tight $g$-frame
$\Lambda=\{\Lambda_{n}\in{B(\mathbb{C})}: n\in{\mathbb{Z}}\}$ with
$\Lambda_{n}=\frac{1}{n^2-2n+4}Id_{\mathbb{C}}$. Since
$\Lambda_1=\frac{1}{3}Id_{\mathbb{C}}$ and
$\Lambda_3=\frac{1}{7}Id_{\mathbb{C}}$, the $g$-frame $\Lambda$ has
not any representation.
\end{exa}
\begin{rem}
Note that, all results that we have investigated for $g$-frames with index set $\mathbb{N}$, are satisfied for $g$-frames with index set $\mathbb{Z}$, as well.
\end{rem}
\begin{thm}
Let a $g$-frame $\Lambda=\{\Lambda_i\in B(\h,\K):i\in\mathbb{Z}\}$ is represented by $T$, then $\ker T_\Lambda$ is invariant under right-shift and left-shift and 
\begin{align*}
1\leq\|T\|\leq\sqrt{B_\Lambda A_\Lambda^{-1}},\quad 1\leq\|T^{-1}\|\leq\sqrt{B_\Lambda A_\Lambda^{-1}}.
\end{align*}
\end{thm}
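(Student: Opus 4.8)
The plan is to prove the three conclusions by exploiting the representation $\Lambda_i = \Lambda_0 T^i$ for $T \in GL(\h)$, together with the already-established Proposition \ref{inv} and its obvious mirror image. First I would handle the shift-invariance of $\ker T_\Lambda$. Since $\Lambda_{i+1} = \Lambda_0 T^{i+1} = (\Lambda_0 T^i) T = \Lambda_i T$, taking adjoints gives $\Lambda_{i+1}^* = T^* \Lambda_i^*$, exactly the relation used in Proposition \ref{inv}; thus for any $\{g_i\}_{i\in\mathbb{Z}} \in \ker T_\Lambda$ the computation $T_\Lambda \mathcal{T}\{g_i\} = \sum_i \Lambda_{i+1}^* g_i = T^*\sum_i \Lambda_i^* g_i = 0$ shows $\ker T_\Lambda$ is invariant under the right-shift $\mathcal{T}$. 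For the left-shift, note $\Lambda_{i-1} = \Lambda_0 T^{i-1} = \Lambda_i T^{-1}$ (using invertibility of $T$), so $\Lambda_{i-1}^* = (T^{-1})^* \Lambda_i^* = (T^*)^{-1}\Lambda_i^*$, and the analogous computation $T_\Lambda \mathcal{T}^*\{g_i\} = \sum_i \Lambda_{i-1}^* g_i = (T^*)^{-1}\sum_i \Lambda_i^* g_i = 0$ gives invariance under $\mathcal{T}^*$. This is the routine half and goes through cleanly.

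Next I would establish the norm bounds. The upper bound $\|T\| \le \sqrt{B_\Lambda A_\Lambda^{-1}}$ I expect to obtain by essentially the construction in Theorem \ref{MT}: the operator $S$ defined on the dense span by $S(\Lambda_i^* e_j) = \Lambda_{i+1}^* e_j$ satisfies $\|Sf\|^2 \le B_\Lambda A_\Lambda^{-1}\|f\|^2$, and since $\Lambda$ is represented by $T$ one identifies $T = \bar{S}^*$, whence $\|T\| = \|\bar{S}\| \le \sqrt{B_\Lambda A_\Lambda^{-1}}$. The symmetric bound $\|T^{-1}\| \le \sqrt{B_\Lambda A_\Lambda^{-1}}$ follows by running the same argument with the left-shift in place of the right-shift, using the relation $\Lambda_{i-1}^* = (T^*)^{-1}\Lambda_i^*$; here the map $\Lambda_i^* e_j \mapsto \Lambda_{i-1}^* e_j$ extends to a bounded operator whose adjoint is $T^{-1}$, and the same $B_\Lambda A_\Lambda^{-1}$ estimate applies because the $g$-frame bounds are unchanged.

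For the lower bounds $1 \le \|T\|$ and $1 \le \|T^{-1}\|$, I would argue by contradiction or by a direct frame-bound comparison. The cleanest route is: since $T$ and $T^{-1}$ are mutually inverse, $1 = \|Id_\h\| = \|T\,T^{-1}\| \le \|T\|\,\|T^{-1}\|$, so $\|T\|\,\|T^{-1}\| \ge 1$; combined with the two upper bounds this already forces each factor into a bounded regime, but to get each lower bound individually I would instead observe that if $\|T\| < 1$ then $\|T^i\| \to 0$, so $\|\Lambda_i\| = \|\Lambda_0 T^i\| \le \|\Lambda_0\|\|T^i\| \to 0$ as $i \to +\infty$; summing $\sum_i \|\Lambda_i f\|^2$ would then fail the lower $g$-frame inequality for a suitable unit vector (or one contradicts that the tail cannot carry the frame lower bound), giving $\|T\| \ge 1$. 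The identical argument applied to $T^{-1}$ as $i \to -\infty$ yields $\|T^{-1}\| \ge 1$.

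The main obstacle I anticipate is the lower-bound argument: showing $\|T\| \ge 1$ rigorously requires care, because $\|\Lambda_i\| \to 0$ alone does not immediately contradict the $g$-frame condition — one must control the whole sum and use that a genuine $g$-frame has a strictly positive lower bound $A_\Lambda$. The reliable fix is to note that if $\|T\|<1$ the synthesis relation forces the frame operator to degenerate; concretely, because the analysis operator $T_\Lambda^*$ is bounded below by $\sqrt{A_\Lambda}$ yet $\Lambda_i = \Lambda_0 T^i$ with $\|T\| < 1$ makes the far-index contributions summable and shift-compressible, one derives $A_\Lambda \le \|T\|^{2} A_\Lambda$ (or an analogous inequality) via the shift-invariance established above, forcing $\|T\| \ge 1$. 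I would present this as the delicate step and lean on the identity $\Lambda_{i+1} = \Lambda_i T$ together with the two-sided $g$-frame inequality to close it.
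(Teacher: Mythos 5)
Your treatment of the shift-invariance and of the two upper bounds matches the paper's proof essentially step for step: right-shift invariance is Proposition \ref{inv} applied verbatim, left-shift invariance follows from $\Lambda_{i-1}^*=(T^*)^{-1}\Lambda_i^*$ exactly as the paper computes $T_\Lambda\mathcal{T}^*\{g_i\}=(T^{-1})^*T_\Lambda\{g_i\}=0$, and identifying $T^*$ with the operator $\bar{S}$ from the proof of Theorem \ref{MT} (whose norm estimate uses only the shift-invariance of $\ker T_\Lambda$, which you have just established; well-definedness is free here since $T$ is given) yields $\|T\|\leq\sqrt{B_\Lambda A_\Lambda^{-1}}$, with the symmetric bound for $T^{-1}$ obtained by passing to $\{\Lambda_{-i}\}_{i\in\mathbb{Z}}=\{\Lambda_0(T^{-1})^i\}_{i\in\mathbb{Z}}$. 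All of that is sound.

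The genuine gap is in the lower bounds $1\leq\|T\|$, $1\leq\|T^{-1}\|$, and it sits exactly where you flagged the difficulty. Norm decay alone proves nothing: $\|\Lambda_i\|\to 0$ is perfectly compatible with the $g$-frame inequalities (Example \ref{tless1}(ii) is a tight $g$-frame with $\|\Lambda_i\|\to 0$), and on $\mathbb{Z}$ the lower frame bound could in principle be carried entirely by the negative indices, so the "tail cannot carry the frame lower bound" heuristic does not close. Your proposed repair, $A_\Lambda\leq\|T\|^2A_\Lambda$, is asserted but never derived, and the computation actually available from the reindexing identity $\sum_{i\in\mathbb{Z}}\|\Lambda_{i+1}f\|^2=\sum_{i\in\mathbb{Z}}\|\Lambda_i f\|^2$ gives only $A_\Lambda\|f\|^2\leq B_\Lambda\|Tf\|^2$, i.e.\ $\|T\|\geq\sqrt{A_\Lambda B_\Lambda^{-1}}$, which can be strictly less than $1$. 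The paper's device is to iterate the reindexing with an arbitrary power: for $0\neq f$ and every $n\in\mathbb{N}$,
\begin{align*}
A_\Lambda\|f\|^2\leq\sum_{i\in\mathbb{Z}}\|\Lambda_0T^{i-n}(T^nf)\|^2
=\sum_{i\in\mathbb{Z}}\|\Lambda_iT^nf\|^2
\leq B_\Lambda\|T^nf\|^2\leq B_\Lambda\|T\|^{2n}\|f\|^2,
\end{align*}
so $\|T\|^{2n}\geq A_\Lambda B_\Lambda^{-1}$ for all $n$; if $\|T\|<1$ the left side tends to $0$, a contradiction, whence $\|T\|\geq 1$, and replacing $T$ by $T^{-1}$ gives the other bound. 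Note that the substitution $i\mapsto i-n$ is legitimate only because the index set is $\mathbb{Z}$; this is precisely where two-sidedness enters, and it explains the paper's remark that $1\leq\|T\|$ fails in general for index set $\mathbb{N}$. Your proposal needs this quantitative iteration (or an equivalent) to be a proof.
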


\begin{proof}
Similar to the Proposition \ref{inv}, $\ker T_\Lambda$ is invariant under right-shift. For $\{g_i\}_{i\in \mathbb{Z}}\in\ker T_\Lambda,$
\begin{align*}
T_\Lambda\mathcal{T}^*\{g_i\}_{i\in \mathbb{Z}}
=\sum_{i\in \mathbb{Z}}\Lambda_{i-1}^*g_i
=\sum_{i\in \mathbb{Z}}(T^{i-1})^*\Lambda_0^*g_i
&=(T^{-1})^*\Big(\sum_{i\in \mathbb{Z}}(T^{i})^*\Lambda_0^*g_i\Big)
\\&=(T^{-1})^*\Big(\sum_{i\in \mathbb{Z}}\Lambda_i^*g_i\Big)
\\&=(T^{-1})^*T_\Lambda\{g_i\}_{i\in \mathbb{Z}}=0.
\end{align*}
So, $\ker T_\Lambda$ is also invariant under left-shift. Now for some fixed $n\in\mathbb{N}$ and $0\neq f\in\h$
we have 
\begin{align*}
A_\Lambda\|f\|^2\leq\sum_{i\in \mathbb{Z}}\|\Lambda_if\|^2
=\sum_{i\in \mathbb{Z}}\|\Lambda_0 T^if\|^2
&=\sum_{i\in \mathbb{Z}}\|\Lambda_0T^iT^{-n}T^nf\|^2
\\&=\sum_{i\in \mathbb{Z}}\|\Lambda_0T^{i-n}T^nf\|^2
\\&=\sum_{i\in \mathbb{Z}}\|\Lambda_iT^nf\|^2
\\&\leq B_\Lambda\|T^nf\|^2
\leq B_\Lambda\|T\|^{2n}\|f\|^2,
\end{align*}
that implies $\|T\|\geq1$. Since for any $i\in\mathbb{Z}$, $\Lambda_iT=\Lambda_{i+1}$, we have $T^*\Lambda_i^*e_j=\Lambda_{i+1}^*e_j$. So, $T^*$ is the operator $\bar{S}$ that is defined in the proof of the
Theorem \ref{MT}, just on $\Span\{\Lambda_i^*(\K)\}_{i\in\mathbb{Z}}$, and therefore we have $\|T\|\leq\sqrt{B_\Lambda A_\Lambda^{-1}}$, alike. Since 
$\Lambda=\{\Lambda_{-i}\in B(\h,\K):i\in\mathbb{Z}\}=\{\Lambda_0(T^{-1})^i:i\in\mathbb{Z}\}$, by 
replacing $T^{-1}$ instead of $T$, we get $1\leq\|T^{-1}\|\leq\sqrt{B_\lambda A_\Lambda^{-1}}.$
\end{proof}
Part (ii) of the Example \ref{tless1} shows that for the index set $\mathbb{N},$ $1\leq\|T\|$ does not happen, in general.
\begin{cor}
Let a $g$-frame $\Lambda=\{\Lambda_i\in B(\h,\K):i\in\mathbb{Z}\}$ is represented by $T\in GL(\h)$. Then the following hold:
\begin{enumerate}
\item[(i)] If $\Lambda$ is a tight $g$-frame, then $\|T\|=\|T^{-1}\|=1$, and so $T$ is isometry.
\item[(ii)] $\|S_{\Lambda}^{\frac{1}{2}}TS_{\Lambda}^{-\frac{1}{2}}\|=\|S_{\Lambda}^{\frac{1}{2}}T^{-1}S_{\Lambda}^{\frac{-1}{2}}\|=1$.
\end{enumerate}
\end{cor}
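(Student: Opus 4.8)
The plan is to derive both statements from the preceding Theorem, whose norm bounds $1\le\|T\|\le\sqrt{B_\Lambda A_\Lambda^{-1}}$ and $1\le\|T^{-1}\|\le\sqrt{B_\Lambda A_\Lambda^{-1}}$ are the essential input.

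For (i), I would first observe that tightness means $A_\Lambda=B_\Lambda$, so the upper bound $\sqrt{B_\Lambda A_\Lambda^{-1}}$ collapses to $1$. The preceding Theorem then forces $\|T\|=1$ and $\|T^{-1}\|=1$ simultaneously. To upgrade this to the isometry claim, I would argue that for every $f\in\h$,
$$\|f\|=\|T^{-1}Tf\|\le\|T^{-1}\|\,\|Tf\|=\|Tf\|\le\|T\|\,\|f\|=\|f\|,$$
which squeezes $\|Tf\|=\|f\|$ for all $f$; hence $T$ is an isometry.

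For (ii), the idea is to pass to the canonical Parseval $g$-frame and reuse part (i). Setting $S=S_\Lambda^{-1/2}\in GL(\h)$ (which exists and is self-adjoint since $S_\Lambda$ is positive and invertible), a direct computation of the frame operator, namely $\sum_{i}S_\Lambda^{-1/2}\Lambda_i^*\Lambda_i S_\Lambda^{-1/2}=S_\Lambda^{-1/2}S_\Lambda S_\Lambda^{-1/2}=Id_{\h}$, shows that $\Lambda S_\Lambda^{-1/2}=\{\Lambda_i S_\Lambda^{-1/2}\}_{i\in\mathbb{Z}}$ is a Parseval, hence tight, $g$-frame. By Remark \ref{gl} (which, by the remark preceding the statement, applies verbatim to the index set $\mathbb{Z}$), this new $g$-frame is represented by $S_\Lambda^{1/2}TS_\Lambda^{-1/2}\in GL(\h)$. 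Applying part (i) to this tight $g$-frame yields $\|S_\Lambda^{1/2}TS_\Lambda^{-1/2}\|=1$. Since the inverse of the representing operator $S_\Lambda^{1/2}TS_\Lambda^{-1/2}$ is exactly $S_\Lambda^{1/2}T^{-1}S_\Lambda^{-1/2}$, the same application of part (i) gives $\|S_\Lambda^{1/2}T^{-1}S_\Lambda^{-1/2}\|=1$, completing the proof.

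The only point requiring care — and the main, if modest, obstacle — is verifying that $\Lambda S_\Lambda^{-1/2}$ is genuinely represented by $S_\Lambda^{1/2}TS_\Lambda^{-1/2}$ in the sense of the $\mathbb{Z}$-indexed definition, i.e. that $\Lambda_i S_\Lambda^{-1/2}=(\Lambda_0 S_\Lambda^{-1/2})(S_\Lambda^{1/2}TS_\Lambda^{-1/2})^i$ for every $i\in\mathbb{Z}$. This reduces to the identity $(S_\Lambda^{1/2}TS_\Lambda^{-1/2})^i=S_\Lambda^{1/2}T^iS_\Lambda^{-1/2}$, which is immediate from telescoping the intermediate $S_\Lambda^{-1/2}S_\Lambda^{1/2}$ factors, together with the fact that the conjugated operator again lies in $GL(\h)$ so that part (i) is indeed applicable to it. Everything else is bookkeeping.
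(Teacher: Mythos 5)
Your proposal is correct, and it follows exactly the route the paper intends: the paper states this corollary without proof, as an immediate consequence of the preceding theorem's bounds $1\leq\|T\|,\|T^{-1}\|\leq\sqrt{B_\Lambda A_\Lambda^{-1}}$ (which collapse to $1$ when $A_\Lambda=B_\Lambda$) together with Remark \ref{gl} applied to the Parseval $g$-frame $\{\Lambda_i S_\Lambda^{-1/2}\}_{i\in\mathbb{Z}}$. Your two added verifications --- the squeeze $\|f\|=\|T^{-1}Tf\|\leq\|Tf\|\leq\|f\|$ upgrading the norm equalities to the isometry claim, and the telescoping identity $(S_\Lambda^{1/2}TS_\Lambda^{-1/2})^i=S_\Lambda^{1/2}T^iS_\Lambda^{-1/2}$ for all $i\in\mathbb{Z}$ justifying that the conjugated operator represents the Parseval $g$-frame in the $\mathbb{Z}$-indexed sense --- are precisely the details the paper leaves implicit.
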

Authors of the paper \cite{philip} have considered sequences in $\h$ of the form $F=\{T^i f_0\}_{i\in I}$ , with
a linear operator $T$ to study for which bounded operator $T$ and vector $f_0\in\h$, $F$ is a frame for $\h$.
In \cite[Proposition 3.5]{chretal}, it was proved that if the operator $T\in B(\h)$ is
compact, then the sequence $\{T^i f_0\}_{i\in I}$ can not be a frame for infinte dimensional $\h$.
Someone can study these results for family of operators $\{\Lambda_0T^i\in B(\h,\K):i\in\mathbb{Z}\}$ for $T\in B(\h)$ and $\Lambda_0\in B(\h,\K)$.
\section{{\textbf Duality}}
The purpose of this section is to get a necessary and sufficient condition for a $g$-frame 
$\Lambda=\{\Lambda_i\in B(\h,\K_i):i\in\mathbb{N}\}$ to have a representation, by applying the concept of duality. Also, for some $g$-frames with representation we get a dual with representation and in one case without representation. At the end, we get the ralation between representations of dual $g$-frames by index set $\mathbb{Z}$. The proofs of the results are similar to \cite{main, chretal}. 
\begin{thm}\label{dual}
A $g$-frame $\Lambda=\{\Lambda_i\in B(\h,\K):i\in\mathbb{N}\}$ is represnted by $T$ if and only if for a dual 
$\Theta=\{\Theta_{i}\in B(\h,\K):i\in \mathbb{N}\}$ of $\Lambda$ (and hence all), 
\begin{align*}
 \Lambda_{k+1}=\sum_{i\in\mathbb{N}}\Lambda_k\Theta_i^*\Lambda_{i+1}.
\end{align*}
\end{thm}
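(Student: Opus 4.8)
The plan is to prove the equivalence as a cycle of implications so that the parenthetical ``(and hence all)'' comes for free: I would show that a representation $\Lambda_i=\Lambda_1T^{i-1}$ forces the stated identity for \emph{every} dual $\Theta$, and that the identity for a \emph{single} dual already forces a representation. Chaining these gives ``represented'' $\Rightarrow$ ``identity for all duals'' $\Rightarrow$ ``identity for some dual'' $\Rightarrow$ ``represented'', so the three are equivalent. The one structural fact I would isolate at the outset is that duality is symmetric: from $\sum_{i\in\mathbb{N}}\Lambda_i^*\Theta_i=Id_{\h}$ one gets $\sum_{i\in\mathbb{N}}\Theta_i^*\Lambda_i=Id_{\h}$, verified by pairing against $\langle\cdot,\cdot\rangle$ and conjugating.

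For the forward implication I would first rewrite the representation as the one-step recursion $\Lambda_{i+1}=\Lambda_iT$ (clearly equivalent to $\Lambda_i=\Lambda_1T^{i-1}$). Then, fixing $k$ and applying everything to an arbitrary $f\in\h$, I would compute
\begin{align*}
\sum_{i\in\mathbb{N}}\Lambda_k\Theta_i^*\Lambda_{i+1}f
=\sum_{i\in\mathbb{N}}\Lambda_k\Theta_i^*\Lambda_i(Tf)
=\Lambda_k\Big(\sum_{i\in\mathbb{N}}\Theta_i^*\Lambda_i\Big)(Tf)
=\Lambda_k(Tf)=\Lambda_{k+1}f,
\end{align*}
where boundedness of $\Lambda_k$ lets it pass the convergent series and the symmetric duality collapses the middle sum to $Id_{\h}$.

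For the converse I would take the identity for a dual $\Theta$ and propose the operator $T:=\sum_{i\in\mathbb{N}}\Theta_i^*\Lambda_{i+1}$. The key observation is that $T$ is the composition $T=T_\Theta\,\mathcal{T}^*\,T_\Lambda^*$: the analysis operator gives $T_\Lambda^*f=\{\Lambda_if\}_{i\in\mathbb{N}}$, the left shift $\mathcal{T}^*$ produces $\{\Lambda_{i+1}f\}_{i\in\mathbb{N}}$ (which lies in the $l^2$ direct sum since $\sum_i\|\Lambda_{i+1}f\|^2\leq B_\Lambda\|f\|^2$), and the bounded synthesis operator $T_\Theta$ returns $\sum_i\Theta_i^*\Lambda_{i+1}f$. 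Hence $T\in B(\h)$ with no hand estimate needed. The hypothesis then reads $\Lambda_kT=\Lambda_{k+1}$ for all $k$, and a one-line induction yields $\Lambda_i=\Lambda_1T^{i-1}$, so $\Lambda$ is represented by $T$.

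The only real obstacle is the well-definedness and boundedness of the series defining $T$ in the converse; I expect the cleanest route is exactly the factorization $T=T_\Theta\mathcal{T}^*T_\Lambda^*$, inheriting boundedness from the $g$-Bessel synthesis and analysis operators rather than estimating directly. The secondary point to state carefully is the symmetry of the duality relation, since it is what drives the collapse against $\sum_i\Theta_i^*\Lambda_i=Id_{\h}$ in the forward direction.
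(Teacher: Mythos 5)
Your proposal is correct and follows essentially the same route as the paper: the forward direction is the paper's computation (the paper just performs it on the adjoint side, writing $T^*\Lambda_k^*g=\sum_{i\in\mathbb{N}}\Lambda_{i+1}^*\Theta_i\Lambda_k^*g$ and dualizing, which sidesteps your symmetry-of-duality lemma), and the converse uses the identical candidate operator $Tf=\sum_{i\in\mathbb{N}}\Theta_i^*\Lambda_{i+1}f$. Your factorization $T=T_\Theta\mathcal{T}^*T_\Lambda^*$ is a worthwhile refinement, since the paper dismisses the well-definedness and boundedness of this series with ``it is obvious.''
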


\begin{proof}
First, assume that $\Lambda$ is represented by $T$. For any $g\in\K$ we have 
\begin{align*}
 T^*\Lambda_k^*g=T^*\Big(\sum_{i\in\mathbb{N}}\Lambda_i^*\Theta_i\Lambda_k^*g\Big)
 &=\sum_{i\in\mathbb{N}}T^*\Lambda_i^*\Theta_i\Lambda_k^*g
 \\&=\sum_{i\in\mathbb{N}}\Lambda_{i+1}^*\Theta_i\Lambda_k^*g
 =\sum_{i\in\mathbb{N}}\big(\Lambda_k \Theta_i^*\Lambda_{i+1}\big)^*g,
\end{align*}
then, $\Lambda_{k+1}=\sum_{i\in\mathbb{N}}\Lambda_k \Theta_i^*\Lambda_{i+1}$. 
\\Conversely, it is obvious that $\Lambda_iT=\Lambda_{i+1}$ for $Tf=\sum_{i\in\mathbb{N}}\Theta_i^*\Lambda_{i+1}f.$
\end{proof}
\begin{rem}
By \cite[Corollary 3.3]{ws}, for a $g$-Riesz basis $\Lambda=\{\Lambda_{i}\in B(\h,\K):i\in \mathbb{N}\}$, we have
\begin{align*}
 \Big\langle\sum_{i\in\mathbb{N}}\Lambda_k\widetilde{\Lambda}_i^*\Lambda_{i+1}f,g\Big\rangle
 &=\sum_{i\in\mathbb{N}}\langle\widetilde{\Lambda}_i^*\Lambda_{i+1}f,\Lambda_k^*g\rangle
\\ &=\sum_{i\in\mathbb{N}}\delta_{i,k}\langle\Lambda_{i+1}f,g\rangle
 =\langle\Lambda_{k+1}f,g\rangle,\quad f\in\h,g\in\K,
\end{align*}
therefore, by the Theorem \ref{dual}, $\Lambda$ has a representation. 
\end{rem}
In the following, we want to investigate that if a $g$-frame $\Lambda$ has a representation, its duals have representations or not.
If so, what is the relation between their representations?
\begin{exa}
\begin{enumerate}
\item[(i)] Assume that a $g$-frame $\Lambda=\{\Lambda_{i}\in B(\h,\K):i\in \mathbb{N}\}$ is represented by $T$. 
Then, by the Remark \ref{gl}, the canonical dual $\widetilde{\Lambda}$ is represented by $S_\Lambda T S_\Lambda^{-1}$.
\item[(ii)] Consider the $g$-frame $\Lambda=\{\Lambda_{i}\in B(\h):i\in \mathbb{N}\}$ with
 $\Lambda_i=(\frac{2}{3})^iId_{\h}$, that is represented by $\frac{2}{3}Id_{\h}$. The $g$-frame 
 $\Theta=\{\Theta_{i}\in B(\h,\K):i\in \mathbb{N}\}$ with $\Theta_i=(\frac{3}{4})^iId_{\h}$ is a dual of $\Lambda$ that is represented by $\frac{3}{4}Id_{\h}$. 
\item[(iii)] The $g$-frame $\Lambda=\{\Lambda_{i}\in B(\mathbb{C}):i=1,2,3\}$ with $\Lambda_i=2^{i-1}Id_{\mathbb{C}}$
is represented by $2Id_{\mathbb{C}}$, but the dual $\Theta=\{\Theta_{i}\in B(\mathbb{C}):i=1,2,3\}$ of $\Lambda$
with $\Theta_1=-2Id_{\mathbb{C}}, \Theta_2=Id_{\mathbb{C}}$ and $\Theta_3=\frac{1}{4}Id_{\mathbb{C}}$ does not 
have any representation. Note that the dual $\Gamma=\{\Gamma_{i}\in B(\mathbb{C}):i=1,2,3\}$ of $\Lambda$
with $\Gamma_i=\frac{1}{3}(\frac{1}{2})^{i-1}Id_{\mathbb{C}}$ is represented by $\frac{1}{2}Id_{\mathbb{C}}$.
\end{enumerate}
\end{exa}
\begin{prop}
Let a $g$-frame $\Lambda=\{\Lambda_i\in B(\h,\K):i\in\mathbb{Z}\}$ is represented by $T\in GL(\h)$. Then, the canonical dual $\widetilde{\Lambda}$ is represented by $S_\Lambda TS_\Lambda^{-1}=(T^*)^{-1}$.
\end{prop}

\begin{proof}
It is obvious that $\widetilde{\Lambda}$ is represented by $S_\Lambda TS_\Lambda^{-1}$. For any $\{g_i\}_{i\in\mathbb{Z}}\in l^2(\K,\mathbb{Z})$,
\begin{align*}
T^*T_\Lambda\{g_i\}_{i\in\mathbb{Z}}
=\sum_{i\in\mathbb{Z}}T^*\Lambda_i^*g_i
&=\sum_{i\in\mathbb{Z}}(\Lambda_i T)^*g_i
\\&=\sum_{i\in\mathbb{Z}}\Lambda_{i+1}^*g_i
=T_\Lambda\mathcal{T}\{g_i\}_{i\in\mathbb{Z}},
\end{align*}
So, we have 
\begin{align*}
T^*S_\Lambda T=T^*T_\Lambda T_\Lambda^* T
=T^*T_\Lambda(T^*T_\Lambda)^*=T_\Lambda\mathcal{T}\mathcal{T}^*T_\Lambda^*
=T_\Lambda T_\Lambda^*=S_\Lambda.
\end{align*}
Therefore, $S_\Lambda TS_\Lambda^{-1}=(T^*)^{-1}$.
\end{proof}
\begin{rem}
Let $F=\{f_i\}_{i\in\mathbb{Z}}$ and $G=\{g_i\}_{i\in\mathbb{Z}}$ be dual frames that is represented by $T,S\in GL(\h)$, respectively. Then, by the Remark \ref{frem}, the dual $g$-frames $\Lambda=\{\Lambda_{i}\in B(\h,\mathbb{C}):i\in\mathbb{Z}\}$ with $\Lambda_i f=\langle f,f_i\rangle$ and 
$\Theta=\{\Theta_{i}\in B(\h,\mathbb{C}):i\in\mathbb{Z}\}$ with $\Theta_i f=\langle f,g_i\rangle$ are represented by 
$T^*,S^*\in GL(\h)$, respectively. By the \cite[Lemma 3.3]{main}, $S=(T^*)^{-1}.$
\end{rem}
The relation between representations of dual $g$-frames by index set $\mathbb{Z}$ is given in below. 
\begin{thm}\label{dualrep}
Assume that $\Lambda=\{\Lambda_i\in B(\h,\K):i\in\mathbb{Z}\}=\{\Lambda_0T^i:i\in\mathbb{Z}\}$
and $\Theta=\{\Theta_{i}\in B(\h,\K):i\in\mathbb{Z}\}=\{\Theta_0S^{i}:i\in\mathbb{Z}\}$ are dual 
$g$-frames, where $T,S\in GL(\h)$ . Then, $S=(T^*)^{-1}$.
\end{thm}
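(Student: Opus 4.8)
The plan is to translate the pointwise duality identity into an operator identity between the synthesis operators, and then to exploit the fact that each representing operator commutes with the bilateral shift $\mathcal{T}$ in a controlled way, using that $\mathcal{T}$ is unitary on $l^2(\K,\mathbb{Z})$.

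First I would record the hypotheses in operator form. Since $\Lambda$ and $\Theta$ are dual, $\sum_{i\in\mathbb{Z}}\Lambda_i^*\Theta_i f=f$ for every $f\in\h$, and because $T_\Lambda T_\Theta^* f=\sum_{i\in\mathbb{Z}}\Lambda_i^*\Theta_i f$, this is precisely $T_\Lambda T_\Theta^*=Id_{\h}$. Next, exactly as in the proposition preceding this theorem, the relation $\Lambda_i=\Lambda_0 T^i$ gives $\Lambda_i T=\Lambda_{i+1}$, so for $\{g_i\}_{i\in\mathbb{Z}}\in l^2(\K,\mathbb{Z})$ one computes $T^*T_\Lambda\{g_i\}=\sum_{i\in\mathbb{Z}}T^*\Lambda_i^*g_i=\sum_{i\in\mathbb{Z}}\Lambda_{i+1}^*g_i=T_\Lambda\mathcal{T}\{g_i\}$, that is, $T^*T_\Lambda=T_\Lambda\mathcal{T}$.

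The same computation applied to $\Theta_i=\Theta_0 S^i$ yields $S^*T_\Theta=T_\Theta\mathcal{T}$. Taking adjoints gives $T_\Theta^* S=\mathcal{T}^*T_\Theta^*$, and since $\mathcal{T}$ is unitary on $l^2(\K,\mathbb{Z})$ (so $\mathcal{T}^*=\mathcal{T}^{-1}$), multiplying on the left by $\mathcal{T}$ and on the right by $S^{-1}$ produces the key invertible commutation relation $\mathcal{T}T_\Theta^*=T_\Theta^* S^{-1}$. I can then chain the three facts together:
\begin{align*}
T^*=T^*\big(T_\Lambda T_\Theta^*\big)=\big(T^*T_\Lambda\big)T_\Theta^*=T_\Lambda\mathcal{T}T_\Theta^*=T_\Lambda T_\Theta^* S^{-1}=S^{-1},
\end{align*}
which is exactly $S=(T^*)^{-1}$.

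I do not expect a genuine obstacle here; the argument is essentially bookkeeping with adjoints and the direction of the shift. The one point that must be handled with care is the step $\mathcal{T}T_\Theta^*=T_\Theta^* S^{-1}$: it relies on inverting the commutation relation for $\Theta$, which is legitimate only because $\mathcal{T}$ is unitary on the two-sided space $l^2(\K,\mathbb{Z})$. This is precisely where the index set $\mathbb{Z}$ (rather than $\mathbb{N}$) is indispensable, and it explains why the clean identity $S=(T^*)^{-1}$ is available only in the bilateral setting.
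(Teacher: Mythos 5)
Your proof is correct, but it takes a genuinely different (more operator-theoretic) route than the paper. The paper argues directly at the level of vectors: it inserts both representations into the duality identity, writing $f=\sum_{i\in\mathbb{Z}}(T^*)^i\Lambda_0^*\Theta_0S^if=T^*\big(\sum_{i\in\mathbb{Z}}(T^*)^{i-1}\Lambda_0^*\Theta_0S^{i-1}\big)Sf$, recognizes the inner sum (after the reindexing $i\mapsto i-1$, a bijection of $\mathbb{Z}$) as the duality sum again, and concludes $f=T^*Sf$, i.e.\ $T^*S=Id_{\h}$. You instead lift everything to synthesis operators: duality becomes $T_\Lambda T_\Theta^*=Id_{\h}$, the representations become the intertwining relations $T^*T_\Lambda=T_\Lambda\mathcal{T}$ and $S^*T_\Theta=T_\Theta\mathcal{T}$, and you close the argument by inverting the second relation via unitarity of the bilateral shift. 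The two arguments rest on the same underlying mechanism --- your appeal to $\mathcal{T}\mathcal{T}^*=Id$ is exactly the paper's reindexing over $\mathbb{Z}$ in disguise --- but your packaging buys two things. First, the relation $T^*T_\Lambda=T_\Lambda\mathcal{T}$ is established verbatim in the paper's proof of the proposition immediately preceding this theorem (the canonical dual of a $\mathbb{Z}$-indexed $g$-frame is represented by $(T^*)^{-1}$), so your proof reuses machinery the paper already has, and in fact exhibits that proposition as a special case of the same algebra. Second, it isolates precisely where bilaterality enters: on $l^2(\K,\mathbb{N})$ the right-shift is a non-unitary isometry ($\mathcal{T}^*\mathcal{T}=Id$ but $\mathcal{T}\mathcal{T}^*\neq Id$), so your step $\mathcal{T}T_\Theta^*=T_\Theta^*S^{-1}$ breaks down, whereas the paper only records the failure for index set $\mathbb{N}$ through a counterexample. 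The paper's computation is shorter and more elementary; yours is more structural. All your individual steps check out: the commutation relations hold on all of $l^2(\K,\mathbb{Z})$ because the synthesis operators are bounded, and the final chain $T^*=T^*T_\Lambda T_\Theta^*=T_\Lambda\mathcal{T}T_\Theta^*=T_\Lambda T_\Theta^*S^{-1}=S^{-1}$ is valid.
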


\begin{proof}
For any $f\in\h$, we have 
\begin{align*}
f=\sum_{i\in\mathbb{Z}}\Lambda_i^*\Theta_i f
=\sum_{i\in\mathbb{Z}}(T^*)^i\Lambda_0^*\Theta_0S^i f
&=T^*\Big(\sum_{i\in\mathbb{Z}}(T^*)^{i-1}\Lambda_0^*\Theta_0S^{i-1}\Big)Sf
\\&=T^*\Big(\sum_{i\in\mathbb{Z}}\Lambda_i^*\Theta_i\Big)Sf
=T^*Sf.
\end{align*}
Since $T\in GL(\h)$, the proof is completed.
\end{proof}
In general, the Theorem \ref{dualrep} is not satisfied for index set $\mathbb{N}$ \big(see the Example \ref{dual}, (ii)\big).
$$\textbf{Acknowledgment:}$$

\end{document}